\def\jobis#1{FF\fi
  \def\predicate{#1}%
  \edef\predicate{\expandafter\strip@prefix\meaning\predicate}%
  \edef\job{\jobname}%
  \ifx\job\predicate
}
\if\jobis{proposal}%
\DeclareMathOperator{\cent}{center}
 \numberwithin{equation}{section}
 \numberwithin{footnote}{section}
 \newtheorem{thm}{Theorem}[section]
 \newtheorem{cor}[thm]{Corollary}
 \newtheorem{lem}[thm]{Lemma}
 \newtheorem{defn-prop}[thm]{Definition-Proposition}
 \newtheorem{defn-lem}[thm]{Definition-Lemma}
{
    \newtheoremstyle{upright}%
        {8pt plus2pt minus4pt}%
        {8pt plus2pt minus4pt}%
        {\upshape}%
        {}%
        {\bfseries\scshape}%
        {}%
        {1em}%
        {}%
\theoremstyle{upright}

 \newtheorem{defn}[thm]{Definition}

 \newtheorem{rem}[thm]{Remark}

}
 \newcommand{\N}{\mathbb N}
 \newcommand{\Q}{\mathbb Q}
 \newcommand{\R}{\mathbb R}
\title{Singularities on vertical $\epsilon$-log canonical Fano fibrations}
\author{Caucher Birkar}
\address{Caucher Birkar, Yau Mathematical Sciences Center,
Jingzhai, Tsinghua University, Haidian District,
Beijing, China, 100084}
\email{birkar@tsinghua.edu.cn}
\author{Bingyi Chen}
\address{Bingyi Chen, Department of Mathematics,
Sun Yat-sen University,
Guangzhou, China, 510275}
\email{chenby253@mail.sysu.edu.cn, chenby16@tsinghua.org.cn}
\begin{document}

\begin{abstract}
Given a Fano type log Calabi-Yau fibration $(X,B)\to Z$ with $(X,B)$ being $\epsilon$-lc, the first author in \cite{Bi23} proved that the generalised pair $(Z,B_Z+M_Z)$ given by the canonical bundle formula is generalised $\delta$-lc where $\delta>0$ depends only on $\epsilon$ and $\dim X-\dim Z$, which confirmed a conjecture of Shokurov. In this paper, we prove the above result under a weaker assumption. Instead of requiring $(X,B)$ to be $\epsilon$-lc, we assume that $(X,B)$ is {\color{black} $\epsilon$-lc vertically over $Z$}, that is, the log discrepancy of $E$ with respect to $(X,B)$ is $\geq \epsilon$ for any prime divisor $E$ over $X$ whose center on $X$ is vertical over $Z$.
\end{abstract}

\maketitle


\section{Introduction}
We work over an algebraically closed field of characteristic zero. 

Given a contraction $f:X\to Z$, a natural and important problem in birational geometry is to relate the singularities on $X$ and those on $Z$. This problem appears frequently in inductive understanding of varieties. In \cite{Bi23} the first author proved a conjecture of Shokurov regarding singularities on Fano type contractions. It says that given $d\in\N$ and $\epsilon\in \R^{>0}$, there is $\delta\in \R^{>0}$ such that if $f:X\to Z$ is a Fano type contraction of relative dimension $d$ and $(X,B)$ is an $\epsilon$-lc pair with $K_X+B\sim_{\R} 0/Z$, then the discriminant b-divisor defined by the canonical bundle formula has coefficients $\leq 1-\delta$. A consequence of this result is that if $f:X\to Z$ is a Fano fibration of relative dimension $d$ where $X$ is $\epsilon$-lc, then (1) $Z$ is $\delta$-lc if $K_Z$ is $\Q$-Cartier and (2) multiplicities of the fibres of $f$ over codimension one points of $Z$ are bounded from above by $1/\delta$, where $\delta>0$ depends only on $d,\epsilon$.

{\color{black}
It was conjectured by Shokurov (see \cite[Conjecture 1.5]{HJL22}) that a local version of the above result also holds. That is, for any prime divisor $E$ over $Z$, to make the coefficient of $E$ in the discriminant b-divisor at most $1-\delta$, one only need to require that $a(F,X,B)\geq \epsilon$ for any prime divisor $F$ over $X$ with $f(\cent_X F)=\cent_Z E$, instead of requiring that $(X,B)$ is $\epsilon$-lc. As a consequence of this conjecture, to make all the coefficients in the discriminant b-divisor at most $1-\delta$, one only need to require that $a(F,X,B)\geq \epsilon$ for any prime divisor $F$ over $X$ with $f(\cent_X F)\neq Z$, i.e. $(X,B)$ is $\epsilon$-lc vertically over $Z$ (see Definition \ref{defn:vertical} below).  It is worth to mention that under the original condition that $(X,B)$ is $\epsilon$-lc, the general fiber $F$ is a Fano type {\color{black}variety} with an $\epsilon$-lc $\R$-complement, hence belongs to a bounded family by \cite{Bi19,Bi21}. This fact plays {\color{black}an} important role in the proofs in \cite{Bi23}. However, under the new condition that  $(X,B)$ is $\epsilon$-lc vertically over $Z$, the general fiber $F$ may not belong to a bounded family.

The main purpose of this paper is to give an affirmative answer to the above conjecture.
}

{\color{black}
\begin{defn}\label{defn:along}
Let $X$ be a normal variety with a closed subvariety $V\subseteq X$. We say a (generalised) pair on $X$ is \emph{(generalised) $\epsilon$-lc along $V$} if the log discrepancy of $E$ with respect to this (generalised) pair is $\geq \epsilon$ for any prime divisor $E$ over $X$ with $\cent_X(E)\subseteq V$.
\end{defn}

\begin{defn}\label{defn:vertical}
Let $f:X\to Z$ be a projective morphism between normal varieties. We say a (generalised) pair on $X$ is \emph{(generalised) $\epsilon$-lc vertically over $Z$} if the log discrepancy of $E$ with respect to this (generalised) pair is $\geq \epsilon$ for any prime divisor $E$ over $X$ with $f(\cent_X(E))\neq Z$.
\end{defn}

}

\begin{thm}\label{thm:main}
Let $d\in \N$ and let $\epsilon\in \R^{>0}$. Then there is $\delta\in \R^{>0}$ depending only on $d,\epsilon$ satisfying the following. Let $(X,B)$ be a pair, $f:X\to Z$ be a contraction and $V\subseteq Z$ be a closed subvariety such that 
\begin{itemize}
  \item $\dim X-\dim Z=d$,
  \item $K_X+B\sim_{\R} 0/Z$,
  \item $X$ is of Fano type over $Z$, and
  \item $(X,B)$ is $\epsilon$-lc along $f^{-1} V$.
\end{itemize}
Then the generalised pair $(Z,B_Z+M_Z)$ given by the canonical bundle formula 
$$K_X+B\sim_{\R} f^*(K_Z+B_Z+M_Z)$$
is generalised $\delta$-lc along $V$.
\end{thm}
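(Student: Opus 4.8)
The plan is to reduce the new "vertical" statement to the already-known global statement of \cite{Bi23} by a localization/base-change argument that replaces $Z$ by a suitable neighbourhood of the generic point of $V$ and then resolves so that $V$ becomes a divisor. More precisely, given a prime divisor $E$ over $Z$ with $\cent_Z(E)\subseteq V$, I want to estimate the generalised log discrepancy $a(E,Z,B_Z+M_Z)$. After a birational modification $Z'\to Z$ we may assume $E$ is a divisor on $Z'$; the key point is that the canonical bundle formula is compatible with birational base change (the moduli part is b-nef and the discriminant part transforms the expected way), so it suffices to bound the coefficient of $E$ in the discriminant b-divisor $\bB_{Z'}$. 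Now localize at the generic point $\eta$ of $\cent_{Z'}(E)$: we only need to control the fibration over the local ring at $\eta$, and over this base the hypothesis "$(X,B)$ is $\epsilon$-lc along $f^{-1}V$" becomes, after possibly shrinking, genuine $\epsilon$-lc-ness of $(X,B)$ in a neighbourhood of the fibre over $\eta$, because the only centers that matter for the coefficient of $E$ are those mapping into $\cent_{Z'}(E)\subseteq$ (the preimage of $V$).

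Carrying this out, the main steps in order are: (1) Fix $E$ with $\cent_Z(E)\subseteq V$; pass to a log resolution $Z'\to Z$ on which $E$ appears, and base-change $f$ to get $f':X'\to Z'$ with $(X',B')$ the crepant transform, still Fano type over $Z'$ and still with $K_{X'}+B'\sim_\R 0/Z'$, and with the same moduli b-divisor. (2) Replace $Z'$ by a small open/semi-local neighbourhood of the generic point of $\cent_{Z'}(E)$ so that, over this base, every prime divisor $F$ over $X'$ whose image is not dominant over the base has image contained in $f'^{-1}$(the relevant part of $V$'s preimage), hence has log discrepancy $\ge\epsilon$ by hypothesis; observe that the divisors $F$ over $X'$ relevant to computing the coefficient of $E$ in $\bB_{Z'}$ are exactly such vertical divisors, so after this shrinking $(X',B')$ is $\epsilon$-lc "where it matters". (3) Apply the theorem of \cite{Bi23} (Shokurov's conjecture, global case) to $f':X'\to$ base, obtaining a $\delta=\delta(d,\epsilon)$ with the coefficient of $E$ in the discriminant $\le 1-\delta$, hence $a(E,Z,B_Z+M_Z)\ge \delta$. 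Since $E$ was an arbitrary divisor over $Z$ with center in $V$, and $\delta$ does not depend on $E$, this gives that $(Z,B_Z+M_Z)$ is generalised $\delta$-lc along $V$. One should also separately bound the coefficients of the components of $B_Z$ itself whose support meets $V$, but these are covered by the $E$'s that are divisorial on $Z$, so the same argument applies.

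The hard part, and the reason \cite{Bi23} cannot be quoted verbatim, is step (2): under the global hypothesis the general fibre $F$ of $f$ is an $\epsilon$-lc Fano with bounded complement, hence lies in a bounded family by \cite{Bi19,Bi21}, and boundedness of the general fibre is used essentially in the proof in \cite{Bi23}; under the vertical hypothesis the general fibre is \emph{not} controlled at all, so the naive localization does not immediately land us in the hypotheses of the known theorem. The genuine content is therefore to show that boundedness of the general fibre is not actually needed for the \emph{local} estimate at $E$: what is needed is control of the singularities and complements of the fibre over the codimension-$\ge 1$ point $\eta=\eta_{\cent_{Z'}(E)}$, or rather of the generic fibre of $f'$ restricted over the curve germ/higher-dimensional germ transverse to $E$, and there the $\epsilon$-lc-along-$f^{-1}V$ hypothesis does apply. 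Concretely I expect one must run an adjunction/restriction argument: cut $Z'$ by general hyperplanes through $\eta$ to reduce to the case $\dim\cent_{Z'}(E)=0$ (a germ of a curve, i.e. a DVR base), then over this trait the fibre $X'_\eta$ together with $B'|_{X'_\eta}$ is an $\epsilon$-lc log Calabi-Yau pair of dimension $d$ — now genuinely $\epsilon$-lc because the only divisors over $X'$ with center in that fibre are vertical — so it \emph{is} bounded by \cite{Bi19,Bi21}, and one can feed this into the local/semistable-reduction core of \cite{Bi23} to extract $\delta$. Making the cutting-by-hyperplanes compatible with the canonical bundle formula and with $\epsilon$-lc-ness (a Bertini-type statement for generalised pairs, losing only $\epsilon\mapsto \epsilon'(\epsilon,d)$) is the technical heart of the argument; once that is in place, the rest is a citation of \cite{Bi23} applied fibrewise over the resulting trait.
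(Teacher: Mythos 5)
Your reduction in steps (1)--(2) is on the right track as far as it goes: the divisors computing the coefficient of $E$ in the discriminant are indeed those whose centre on $X$ lies in $f^{-1}V$, and the paper likewise reduces the problem by cutting the base with general hyperplanes (and by induction on $\dim Z$ and on relative dimension) to the case $\dim Z=1$, $V=\{z\}$. But two parts of your plan have genuine gaps. First, when $E$ is exceptional over $Z$ you cannot simply base-change to $Z'$ and keep the hypotheses: the crepant pullback $B'$ on the main component of $X\times_Z Z'$ need not be effective and $X'$ need not be of Fano type over $Z'$, so there is nothing to apply a Fano-fibration theorem to. The paper has to work for this (Case 2 of Lemma 3.4): it extracts $D$ on the base, builds a new boundary $\Delta_W$ with coefficients $1$ (exceptional, not over $V$) and $1-\epsilon$ (exceptional, over $V$), runs two MMPs to produce a genuine Fano type fibration $(X',\Delta_{X'})\to Z'$ on which $D$ is divisorial, and only then compares log discrepancies via $B_V\le\Delta_V$ on a common resolution.

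Second, and more seriously, the core of your argument --- ``the special fibre over $\eta$ with the restricted boundary is an $\epsilon$-lc log Calabi--Yau pair, hence bounded, and one feeds this into the core of \cite{Bi23}'' --- does not work as stated. The hypothesis is $\epsilon$-lc-ness of the total space pair along $f^{-1}\{z\}$; this does not make the fibre (which may be non-reduced, reducible, non-normal) an $\epsilon$-lc pair: adjunction to a fibre component involves precisely the multiplicities of $f^*z$ that the theorem is trying to bound, so the claim is circular, and in any case \cite{Bi23} is not modular in a way that accepts boundedness of the special fibre in place of the general fibre without a new argument. The paper's actual mechanism, absent from your proposal, is different: after reducing to $\dim Z=1$, $V=\{z\}$, $X$ itself $\epsilon$-lc and $-K_X$ big and nef over $Z$ (Lemma 3.5), one takes the divisor $T$ computing the relevant threshold, extracts it, runs an MMP on $-K$ (which is an isomorphism over $Z\setminus\{z\}$ since $-K_X$ is nef there), and uses semiampleness to produce a klt complement $R\sim_{\Q}-K_X/Z$ which is $\epsilon$-lc over $Z\setminus\{z\}$ and satisfies $a(T,X,R)\le 1$. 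The averaged pair $(X,\tfrac12B+\tfrac12R)$ is then globally $\tfrac{\epsilon}{2}$-lc and $K$-trivial over $Z$, so \cite[Theorem 1.1]{Bi23} applies to it, and $a(T,X,\tfrac12B+\tfrac12R)\le 1$ converts the resulting bound on the discriminant into the bound $\mu_T f^*z\le 1/\delta$. Without this (or some substitute) your step (3) remains an appeal to a theorem whose hypotheses you have not arranged, so the proposal does not constitute a proof.
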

\begin{rem}
(1) When $d=1$, Theorem \ref{thm:main} was proved in \cite[Theorem 1.4]{Ch22} with an {\color{black} optimal} estimate for $\delta$. 

(2) When $f:X\to Z$ is a toric morphism between toric varieties and $(X,B)$ is a toric pair, Theorem \ref{thm:main} was proved in \cite[Theorem 1.8]{Ch23} with an effective estimate for $\delta$.

{\color{black}(3) When $V$ is a point, the problem was mentioned to us by Florin Ambro.}
\end{rem}

As a corollary, we have the following global version of Theorem \ref{thm:main}.
\begin{cor}\label{cor:vertical}
Let $d\in \N$ and let $\epsilon\in \R^{>0}$. Then there is $\delta\in \R^{>0}$ depending only on $d,\epsilon$ satisfying the following. Let $(X,B)$ be a pair and $f:X\to Z$ be a contraction such that 
\begin{itemize}
    \item $\dim X-\dim Z=d$,
    \item $K_X+B\sim_{\R} 0/Z$,
    \item $X$ is of Fano type over $Z$, and
    \item $(X,B)$ is {\color{black}$\epsilon$-lc vertically over $Z$}.
\end{itemize}
Then the generalised pair $(Z,B_Z+M_Z)$ given by the canonical bundle formula 
  $$K_X+B\sim_{\R} f^*(K_Z+B_Z+M_Z)$$
  is generalised $\delta$-lc.
\end{cor}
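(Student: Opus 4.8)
The plan is to deduce the global statement from Theorem \ref{thm:main} by a standard localization argument over each prime divisor of $Z$. Recall that to say the generalised pair $(Z,B_Z+M_Z)$ is generalised $\delta$-lc means that for every prime divisor $E$ over $Z$ the generalised log discrepancy $a(E,Z,B_Z+M_Z)$ is $\geq \delta$. Fix such a prime divisor $E$ and let $V=\cent_Z(E)$, a closed subvariety (possibly all of $Z$, but then the inequality is automatic since the coefficient of $E$ in $B_Z$ on a high model is $\le 1-\delta$ by the original result of \cite{Bi23}, or more simply because the condition of being $\epsilon$-lc vertically already handles it; in any case we may assume $V\subsetneq Z$). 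Then $f^{-1}V$ is a proper closed subset of $X$, and a prime divisor $F$ over $X$ has $\cent_X(F)\subseteq f^{-1}V$ precisely when $f(\cent_X(F))\subseteq V\subsetneq Z$, in particular $f(\cent_X(F))\neq Z$. Hence the hypothesis that $(X,B)$ is $\epsilon$-lc vertically over $Z$ implies that $(X,B)$ is $\epsilon$-lc along $f^{-1}V$.

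Now I would simply apply Theorem \ref{thm:main} to the data $(X,B)$, $f:X\to Z$, and the subvariety $V=\cent_Z(E)$: all four bulleted hypotheses of the theorem are satisfied — the relative dimension is $d$, $K_X+B\sim_\R 0/Z$, $X$ is of Fano type over $Z$, and we have just checked $\epsilon$-lc along $f^{-1}V$ — so the conclusion gives that $(Z,B_Z+M_Z)$ is generalised $\delta$-lc along $V$, with $\delta=\delta(d,\epsilon)$ the constant produced by Theorem \ref{thm:main}. By definition of "generalised $\delta$-lc along $V$" this means $a(E',Z,B_Z+M_Z)\geq \delta$ for every prime divisor $E'$ over $Z$ with $\cent_Z(E')\subseteq V=\cent_Z(E)$; in particular, taking $E'=E$, we get $a(E,Z,B_Z+M_Z)\geq \delta$. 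Since the canonical bundle formula data $(B_Z,M_Z)$ does not depend on the choice of $E$, and since $E$ was an arbitrary prime divisor over $Z$, this proves that $(Z,B_Z+M_Z)$ is generalised $\delta$-lc.

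The argument is essentially a formal reduction, so there is no real obstacle here; the entire content sits in Theorem \ref{thm:main}. The only point requiring a word of care is the independence of the canonical bundle formula data from $E$: the generalised pair structure $(Z,B_Z+M_Z)$ attached to $K_X+B\sim_\R f^*(K_Z+B_Z+M_Z)$ is intrinsic to the fibration (up to the usual $\R$-linear equivalence ambiguity in $M_Z$, which does not affect discrepancies), and when we invoke Theorem \ref{thm:main} for different subvarieties $V$ we obtain log discrepancy bounds for the \emph{same} b-divisor. Thus ranging $V$ over all centers $\cent_Z(E)$ covers all prime divisors over $Z$ and yields the uniform bound $\delta$, completing the proof.
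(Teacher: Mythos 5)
Your proof is correct and is essentially the same as the paper's: fix a prime divisor $E$ over $Z$, set $V=\cent_Z E$, note the vertical $\epsilon$-lc hypothesis gives $\epsilon$-lc along $f^{-1}V$, apply Theorem \ref{thm:main}, and conclude $a(E,Z,B_Z+M_Z)\geq\delta$. The only minor remark is that your hedge about $V$ possibly equalling $Z$ is unnecessary, since the center of a prime divisor over $Z$ always has dimension $\leq \dim Z-1$, hence is a proper subvariety (as the paper notes).
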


As a consequence, we obtain the boundedness of singularities on base spaces and the boundedness of multiplicities of fibres over codimension one points for vertical $\epsilon$-lc Fano fibrations.

\begin{cor}\label{cor:sing-mult}
Let $d\in \N$ and let $\epsilon\in \R^{>0}$. Then there is $\delta\in \R^{>0}$ depending only on $d,\epsilon$ satisfying the following. Let $f:X\to Z$ be a contraction such that 
{\color{black}
\begin{itemize}
\item $\dim X-\dim Z=d$
\item $-K_X$ is ample over $Z$
\item $X$ is klt, and
\item $X$ is $\epsilon$-lc vertically over $Z$.
\end{itemize}
Then we have}

(1) $Z$ is $\delta$-lc if $K_Z$ is $\Q$-Cartier, and 

(2) for any codimension one point $z\in Z$, the multiplicity of each component of $f^*z$ is bounded from above by $1/\delta$.

\end{cor}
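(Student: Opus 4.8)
We deduce Corollary \ref{cor:sing-mult} from Corollary \ref{cor:vertical}. Since $(X,0)$ is klt and $-K_X$ is ample over $Z$, the variety $X$ is of Fano type over $Z$. The plan is: (a) equip $X$ with a boundary $B$ so that $(X,B)\to Z$ satisfies the hypotheses of Corollary \ref{cor:vertical} with $\epsilon$ replaced by $\epsilon/2$; (b) read off (1) and (2) from the resulting statement that the generalised pair $(Z,B_Z+M_Z)$ is generalised $\delta$-lc, with $\delta=\delta(\epsilon/2,d)=\delta(\epsilon,d)>0$.

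\emph{Constructing the boundary.} Pick an ample $\Q$-divisor $A$ on $Z$ and $N\in\N$ large and divisible enough that $L:=-NK_X+f^*A$ is very ample on $X$ (possible as $-NK_X$ is ample over $Z$ and $A$ is ample on $Z$), let $H_1,\dots,H_N$ be general members of $|L|$, and set $B:=\tfrac1{N^2}\sum_{i=1}^N H_i$. Then $B\ge 0$ and $B\sim_{\R}-K_X+\tfrac1N f^*A\sim_{\R}-K_X/Z$, so $K_X+B\sim_{\R}0/Z$; since $L$ is $f$-ample a general $H_i$ is horizontal, so $B$ has no vertical component, and $(X,B)$ is klt. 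The crucial point is that for a general choice of the $H_i$ the pair $(X,H_i)$ is log canonical — a general member of a base point free system on a klt variety — hence $\operatorname{ord}_E(H_i)\le a(E,X,0)$ for every prime divisor $E$ over $X$; also $\operatorname{ord}_E(H_i)=0$ unless $\cent_X(E)\subseteq H_i$, and since the $H_i$ are general members of a very ample system, $\cent_X(E)$ lies in at most $\codim_X\cent_X(E)\le\dim X$ of them. Therefore $\operatorname{ord}_E(B)\le\tfrac{\dim X}{N^2}\,a(E,X,0)$, so
$$a(E,X,B)=a(E,X,0)-\operatorname{ord}_E(B)\ \ge\ \Big(1-\tfrac{\dim X}{N^2}\Big)a(E,X,0).$$
When $\cent_X(E)$ is vertical over $Z$ we have $a(E,X,0)\ge\epsilon$ because $X$ is $\epsilon$-lc vertically over $Z$, so taking $N$ with $N^2\ge 2\dim X$ gives $a(E,X,B)\ge\epsilon/2$; that is, $(X,B)$ is $(\epsilon/2)$-lc vertically over $Z$, and Corollary \ref{cor:vertical} applies, yielding that $(Z,B_Z+M_Z)$ is generalised $\delta$-lc.

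\emph{Deducing (1) and (2).} For (1), assume $K_Z$ is $\Q$-Cartier; then $B_Z+M_Z$ is $\R$-Cartier, and since $B_Z\ge 0$ and $\bM$ is b-nef the generalised $\delta$-lc property of $(Z,B_Z+M_Z)$ gives $a(E,Z,0)\ge a(E,Z,B_Z+M_Z)\ge\delta$ for every prime divisor $E$ over $Z$, i.e. $Z$ is $\delta$-lc. For (2), let $z\in Z$ be a codimension one point, $Z_0=\overline{\{z\}}$, let $F$ be a component of $f^*Z_0$ with $r:=\mult_F(f^*Z_0)$, and let $b$ be the coefficient of $Z_0$ in $B_Z$. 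By definition of the discriminant, $1-b$ is the log canonical threshold of $f^*Z_0$ with respect to $(X,B)$ over the generic point of $Z_0$ (where $(X,B)$ is klt); as $a(F,X,B+tf^*Z_0)=a(F,X,B)-tr$ and $a(F,X,B)=1$ (since $F\not\subseteq\Supp B$), this threshold is $\le 1/r$, so $1-b\le 1/r$. Since the moduli part of the canonical bundle formula is effective over codimension one points, $\delta\le a(Z_0,Z,B_Z+M_Z)\le 1-b\le 1/r$, whence $r\le 1/\delta$.

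\emph{Main difficulty.} The step that requires care is the boundary construction: choosing $B$ merely with small coefficients does not bound the vertical log discrepancies of $(X,B)$ below uniformly, because $\operatorname{ord}_E(B)$ is not governed by the coefficients of $B$ alone (note also that $N$, hence $B$, may depend on $X$, while $\epsilon/2$ must not). The remedy is the observation that a general member of a base point free linear system gives a log canonical pair, which bounds $\operatorname{ord}_E(H_i)$ by $a(E,X,0)$ and so lets the bound $a(E,X,0)\ge\epsilon$ on vertical centers pass to $(X,B)$; everything else is a routine use of Corollary \ref{cor:vertical} together with standard facts about the canonical bundle formula and generalised pairs (the comparison of log discrepancies under adding $B_Z+M_Z$, and effectivity of the moduli part over codimension one points).
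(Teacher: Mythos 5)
Your proposal is correct and follows essentially the same route as the paper: build a boundary $B$ from general members of a (relatively) very ample anti-pluricanonical linear system so that $K_X+B\sim_{\R}0/Z$ and $(X,B)$ remains $\epsilon$-lc (in your case $\tfrac{\epsilon}{2}$-lc) vertically over $Z$, then apply Corollary \ref{cor:vertical} and read off (1) and (2) from the coefficients of $B_Z$. The paper uses a single general $H\sim -NK_X/Z$ with $1/N\le 1-\epsilon$, whereas you average $N$ general members with coefficient $1/N^2$ and bound $\operatorname{ord}_E$ via log canonicity of $(X,H_i)$ --- a harmless factor-of-two variant; the only small fix needed is the order of choices in the construction ($A$ should be taken sufficiently ample after fixing $N$, since for a fixed $A$ the divisor $-NK_X+f^*A$ need not become very ample as $N$ grows).
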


\subsection*{Acknowledgements} 
The first author {\color{black} was} supported by a grant from Tsinghua University and a grant of the National Program of Overseas High Level Talent. The second author {\color{black} was} supported by the start-up fund from  Sun Yat-sen University.

\section{Preliminaries}
We will freely use the standard notations and definitions in \cite{KM98,BCHM10}. 

A \emph{contraction} is a projective morphism $f:X\rightarrow Z$ with $f_*\mathcal{O}_X=\mathcal{O}_Z$.

Let $X$ be a variety and let $D$ be an $\R$-divisor on $X$. For a prime divisor $E$ on $X$, by $\mu_E D$ we mean the coefficient of $E$ in $D$. 

\subsection{Pairs and singularities} A \emph{sub-pair} $(X,B)$ consists of a normal variety $X$ and an $\R$-divisor $B$ on $X$ such that $K_X+B$ is $\R$-Cartier. A sub-pair $(X,B)$ is called a \emph{pair} if $B$ is effective. 

Let $(X,B)$ be a sub-pair and $E$ be a prime divisor on some birational model $\phi:X'\to X$. The \emph{center} of $E$ on $X$ is defined to be the image of $E$ on $X$ under the morphism $\phi$.
We may write $K_{X'}+B'=\phi^*(K_X+B)$ for some uniquely determined $B'$. Then the \emph{log discrepancy} $a(E,X,B)$ of $E$ with respect to $(X,B)$ is defined as $1-\mu_E B'$.

\begin{defn}
Let $\epsilon$ be a non-negative real number, $(X,B)$ be a (sub-)pair and $V\subseteq X$ be a closed subvariety. We say $(X,B)$ is  \emph{(sub-)$\epsilon$-lc} (resp.  \emph{(sub-)lc},  \emph{(sub-)klt}) along $V$ if 
$$a(E,X,B)\geq \epsilon ~(\text{resp. $\geq 0$, $>0$})$$
for any prime divisor $E$ over $X$ with $\cent_X E\subseteq V$. In the case when $V=X$, we say that $(X,B)$ is  (sub-)$\epsilon$-lc (resp.  (sub-)lc,  (sub-)klt).
\end{defn}

\subsection{b-divisors}
A \emph{b-$\R$-Cartier b-divisor} over a variety $X$ is the choice of a projective birational morphism $Y\to X$ from a normal variety and an $\R$-Cartier $\R$-divisor $M$ on $Y$ up to the following equivalence: another projective birational morphism $Y'\to X$ and an $\R$-Cartier $\R$-divisor $M'$ define the same b-$\R$-Cartier b-divisor if there is a common resolution $W\to Y$ and $W\to Y'$ on which the pullbacks of $M$ and $M'$ coincide.

\subsection{Generalised pairs}
A \emph{generalised sub-pair} consists of 
\begin{itemize}
\item a normal variety $X$ with a projective morphism $X\to Z$,
\item an $\R$-divisor $B$ on $X$, and 
\item a b-$\R$-Cartier b-divisor over $X$ represented by some projective birational morphism $X'\xrightarrow{\phi} X$ and an $\R$-Cartier $\R$-divisor $M'$ on $X'$
\end{itemize}
such that $M'$ is nef over $Z$ and $K_X+B+M$ is $\R$-Cartier, where $M:=\phi_{*} M'$. In the case when $B\geq 0$, we call it a \emph{generalised pair}.

Since a b-$\R$-Cartier b-divisor is defined birationally, in practice we will often replace $X'$ with a resolution and replace $M'$ with its pullback.

Let $(X,B+M)$ be a generalised sub-pair with data $X'\xrightarrow{\phi} X\to Z$ and $M'$. Let  $E$ be a prime divisor over $X$. Replacing $X'$ we may assume that $E$ is a prime divisor on $X'$. We can write
$$K_{X'}+B'+M'=\phi^*(K_X+B+M)$$
for some uniquely determined $B'$. Then the log discrepancy of $E$ with respect to $(X,B+M)$ is defined as $1-\mu_E B'$ and denoted by $a(E,X,B+M)$.

\begin{defn}
  Let $\epsilon$ be a non-negative real number, $(X,B+M)$ be a generalised (sub-)pair and $V\subseteq X$ be a closed subvariety. We say $(X,B+M)$ is \emph{generalised  (sub-)$\epsilon$-lc} (resp.  \emph{(sub-)lc},  \emph{(sub-)klt}) along $V$ if
$$a(E,X,B+M)\geq \epsilon ~(\text{resp. $\geq 0$, $>0$})$$
for any prime divisor $E$ over $X$ with $\cent_X E\subseteq V$. In the case when $V=X$, we say that $(X,B+M)$ is generalised  (sub-)$\epsilon$-lc (resp.  (sub-)lc,  (sub-)klt).
\end{defn}

\begin{lem}{\rm (cf. \cite[Lemma 2.7]{Ch23})}\label{lem:lc}
Let $(X,B+M)$ be a generalised pair with data $Y\xrightarrow{\pi} X\to Z$ and $M_Y$ on $Y$. If $(X,B+M)$ is generalised lc along a closed subvariety $V\subseteq X$, then it is generalised lc in a neighbourhood of $V$.
  \end{lem}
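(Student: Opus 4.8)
The plan is to pass to a log resolution and reduce the statement to an explicit weighted blow-up computation. First I would arrange that $\pi\colon Y\to X$ is a log resolution of $(X,\Supp B)$ on which the nef part descends: replacing $Y$ by a higher resolution and $M_Y$ by its pullback changes neither the b-divisor nor its pushforward to $X$ and keeps $M_Y$ nef over $Z$, so we may assume that $M_W$ is the pullback of $M_Y$ for every resolution $W\to Y$. Writing $K_Y+B_Y+M_Y=\pi^*(K_X+B+M)$, this descent property gives $K_W+B_W=\sigma^*(K_Y+B_Y)$ for every $\sigma\colon W\to Y$, hence $a(E,X,B+M)=a(E,Y,B_Y)$ for every prime divisor $E$ over $X$. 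Since $(Y,B_Y)$ has simple normal crossing support, the locus over which $(Y,B_Y)$ fails to be sub-lc is exactly $\bigcup\{D:D\text{ a component of }B_Y,\ \mu_D B_Y>1\}$; pushing forward, $(X,B+M)$ is generalised lc precisely over the open set $X\setminus S$, where $S:=\pi\bigl(\bigcup\{D:\mu_D B_Y>1\}\bigr)$ is closed. It therefore suffices to show $V\cap S=\emptyset$, since then $V\subseteq X\setminus S$ and $X\setminus S$ is the required neighbourhood.

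Suppose not. Then some component $D$ of $B_Y$ has $b:=\mu_D B_Y>1$ and $\pi(D)\cap V\neq\emptyset$, so $D\cap\pi^{-1}V\neq\emptyset$. Let $W$ be an irreducible component of $D\cap\pi^{-1}V$, with generic point $\eta$; then $W\subseteq D$ and $\pi(W)\subseteq V$. I will construct a prime divisor $E$ over $X$ with $\cent_X E=\pi(W)\subseteq V$ and $a(E,X,B+M)<0$, contradicting the hypothesis that $(X,B+M)$ is generalised lc along $V$.

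For the construction I would work in $\mathcal O_{Y,\eta}$, a regular local ring of dimension $r:=\codim_Y W$. As $D$ is a smooth component of $B_Y$ containing $W$, the prime of $D$ extends to a regular system of parameters $t_1,\dots,t_r$ with $D=\{t_1=0\}$, and by the SNC property the finitely many components $D=D_1,D_2,\dots,D_j$ of $B_Y$ that contain $W$ can be taken among the $\{t_l=0\}$, with $D_i=\{t_{l_i}=0\}$ and $l_i\geq2$ for $i\geq2$ (note $j\leq r$). For $m\in\N$ let $E_m$ be the exceptional divisor of the weighted blow-up along $W$ with weights $(m,1,\dots,1)$ on $(t_1,\dots,t_r)$, so that $\cent_Y E_m=W$. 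The standard toric discrepancy computation, together with the fact that components of $B_Y$ not containing $W$ are units at $\eta$ and so contribute $0$, gives
$$a(E_m,Y,B_Y)=\bigl(m+(r-1)\bigr)-\Bigl(mb+\sum_{i=2}^{j}\mu_{D_i}B_Y\Bigr)=m(1-b)+(r-1)-\sum_{i=2}^{j}\mu_{D_i}B_Y.$$
Since $1-b<0$, the right side tends to $-\infty$ as $m\to\infty$; fixing $m$ large enough that it is negative and using $a(E_m,X,B+M)=a(E_m,Y,B_Y)$ together with $\cent_X E_m=\pi(W)\subseteq V$ yields the desired contradiction. Hence $V\cap S=\emptyset$ and $(X,B+M)$ is generalised lc on the open neighbourhood $X\setminus S$ of $V$.

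The bookkeeping in the first paragraph — descent of the nef part, the identification of the non-lc locus with the components of coefficient $>1$, and closedness of $S$ — is routine in the generalised-pair formalism. The step that requires genuine care is the construction in the third paragraph: one must choose the modification so that its exceptional divisor is centred \emph{exactly} on $W$, so that the centre on $X$ lands inside $V$ rather than on the possibly larger set $\pi(D)$, while arranging, via a sufficiently large weight on the offending component $D$, that the log discrepancy is driven below zero. Checking that only the components of $B_Y$ through $\eta$ enter the discrepancy formula and that these fit into a single regular system of parameters is the main technical point; everything else is formal.
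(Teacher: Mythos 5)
Your proof is correct, and its overall strategy matches the paper's: pass to a log resolution on which the nef part descends, so generalised log discrepancies become sub-pair log discrepancies of an snc sub-pair, and then, from a component $D$ of $B_Y$ with coefficient $>1$ whose image meets $V$, manufacture a divisor over $X$ whose centre lies inside $V$ and whose log discrepancy is negative, contradicting lc-ness along $V$. The execution differs in two respects. First, the paper argues directly by contradiction from ``not lc in any neighbourhood of $V$'', whereas you first identify the non-generalised-lc locus as the closed set $S=\pi\bigl(\bigcup\{D:\mu_D B_Y>1\}\bigr)$ and reduce the statement to $V\cap S=\emptyset$; this is a cleaner packaging of the closedness that the lemma is really about, and your justification (the generic point of any centre meeting $X\setminus S$ lies in $X\setminus S$) is fine. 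Second, the paper arranges $\pi^{-1}V$ to be a divisor $F$, picks a component $E$ of $F$ meeting $D$, and performs successive ordinary blow-ups of $D\cap E$ and its strict-transform intersections, gaining $d-1>0$ in the coefficient at each step; you instead use a single monomial (weighted blow-up) valuation along a component $W$ of $D\cap\pi^{-1}V$ with weight $m$ on the equation of $D$ and let $m\to\infty$. The two constructions yield the same kind of valuations, but your one-step version avoids having to make $\pi^{-1}V$ divisorial; the points needing care — that the components of $B_Y$ through the generic point of $W$ fit into one regular system of parameters by the snc hypothesis, and that the centre of the monomial valuation is exactly $W$ so its centre on $X$ lands in $V$ — are exactly the ones you address, and your discrepancy formula (with the fixed, $m$-independent contribution of the other components, of whatever sign) is correct.
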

  \begin{proof}
  Assume the contrary that $(X,B+M)$ is not lc in any neighborhood of $V$. Then there is a prime divisor $D$ over $X$ such that ${\color{black}(\cent_X D)}\cap V\neq \emptyset$ and $a(D,X,B+M)< 0$.
Replace $Y$ with a higher resolution such that
\begin{itemize}
      \item $D$ is a prime divisor on $Y$,
      \item $\pi^{-1}V$ is a divisor on $Y$, say $F$, and 
      \item $D+F$ is a simple normal crossing divisor on $Y$.
\end{itemize}
Let $K_Y+B_Y+M_Y$ be the pullback of $K_X+B+M$ to $Y$. Since ${\color{black}(\cent_X D)}\cap V\neq \emptyset$, there is a component $E$ of $F$ such that $D\cap E\neq \emptyset$. Denote $d=\mu_D B_Y>1$ and $e=\mu_E B_Y$. Blowing up $D\cap E$, we get a new resolution $\pi':Y'\rightarrow X$. Let $K_{Y'}+B_{Y'}$ be the pullback of $K_{Y}+B_Y$, then for any prime divisor $L$ on $Y'$ we have $a(L,X,B+M)=1-\mu_L B_{Y'}$.

Denote by $E'$ the exceptional$/Y$ divisor on $Y'$. Then we have $\cent_X E'\subseteq V$, $\mu_{E'} B_{Y'}\geq e+d-1>e$ and $E'$ meets $D'$ transversely, where $D'$ is the birational transformation of $D$ on $Y'$. So, by successively blowing up, we eventually obtain a prime divisor $\widetilde{E}$ over $X$ such that $\cent_X \widetilde{E}\subseteq V$ and $a(\widetilde{E},X,B+M)<0$, which is in contradiction with {\color{black}the assumption} that $(X,B+M)$ is  generalised lc along $V$.
\end{proof}

\subsection{Generalised canonical bundle formula}
The canonical bundle formula for usual pairs was established by Kawamata \cite{Ka97,Ka98} and Ambro \cite{Am05}. In this subsection we will briefly introduce the canonical bundle formula for generalised pairs. For more details, we refer the readers to \cite{Fi20}, \cite{JLX22} and \cite[\S 11.4]{CHLX23}.

Assume that 
\begin{itemize}
\item $(X,B+M)$ is a generalised pair with data $X'\rightarrow X\rightarrow {\color{black}U}$ and $M'$,
\item $f:X\to Z{\color{black}/U}$ is a contraction {\color{black}between varieties over $U$} with $\dim Z>0$,
\item $(X,B+M)$ is generalised lc over the generic point of $Z$, and
\item $K_X+B+M\sim_{\R} 0/Z$.
\end{itemize}

For any prime divisor $D$ on $Z$, let $t_D$ be the largest real number such that $(X,B+tf^*D+M)$ is generalised lc over the generic point of $D$. This {\color{black} makes} sense even if $D$ is not $\Q$-Cartier because we only need the pullback of $D$ over the generic point of $D$ where $Z$ is smooth.
We then set $B_Z=\sum_D (1-t_D)D$ where $D$ runs over all prime divisors on $Z$. Having defined $B_Z$, we can find $M_Z$ such that
$$K_X+B+M\sim_{\R} f^*(K_Z+B_Z+M_Z),$$
where $M_Z$ is determined up to $\R$-linear equivalence. We call $B_Z$ the \emph{discriminant divisor} and call $M_Z$ the \emph{moduli divisor}.

For any birational morphism $\sigma: Z'\to Z$ from a normal variety, we can similarly defines $B_{Z'}$ and $M_{Z'}$ so that $\sigma_* B_{Z'}=B_Z$ and $\sigma_* M_{Z'}=M_Z$. Putting all the $B_{Z'}$ (resp. $M_{Z'}$) together for all the possible $Z'$ determines a b-divisor $\textbf{B}_Z$ (resp.  $\textbf{M}_Z$), which is called the \emph{discriminant} (resp. \emph{moduli}) \emph{b-divisor}.
  
It was shown in \cite[Theorem 11.4.4]{CHLX23} that taking $Z'$ high enough, $M_{Z'}$ is nef over {\color{black}$U$} and for any other resolution $\pi: Z''\to Z'$ we have $M_{Z''}=\pi^*M_{Z'}$. Hence we can regard $(Z,B_Z+M_Z)$ as a generalised pair with data $Z'\to Z {\color{black}\to U}$ and $M_{Z'}$. We call it the generalised pair given by the canonical bundle formula of $f:(X,B+M)\rightarrow Z$.

\subsection{Fano type contractions} Let $X\to Z$ be a contraction. 
We say $X$ is \emph{of Fano type over} $Z$ if there is a klt pair $(X,C)$ such that $-(K_X+C)$ is ample over $Z$. This is equivalent to having a klt pair $(X,B)$ such that $K_X+B\sim_{\R} 0/Z$ and $-K_X$ is big over $Z$. 

If $X$ is of Fano type over $Z$, then $X$ is Mori dream space over $Z$ by \cite{BCHM10}. That is, for any $\R$-Cartier $\R$-divisor $D$ on $X$, one can run an MMP on $D$ over $Z$ and the MMP ends with a good minimal model or a Mori fiber space over $Z$.

\begin{lem}{\rm(\cite[Lemma 2.8]{PS09})}\label{lem:preserve}
Suppose that $X$ is of Fano type over $Z$.

{\rm{(1)}} Suppose $X \dashrightarrow X'/Z$ is a birational map whose inverse does not contract any divisor. Then $X'$ is of Fano type over $Z$.

{\rm{(2)}} Suppose $X\to Y/Z$ is a contraction of varieties over $Z$. Then $Y$ is of Fano type over $Z$.

{\rm{(3)}} Let $(X,B)$ be an lc pair such that $-(K_X+B)$ is nef over $Z$. Let $Y$ be a normal variety with a projective birational morphism $Y\to X$ and let $K_Y+B_Y$ be the pullback of $K_X+B$. If every exceptional$/X$ component of $B_Y$ has positive coefficient, then $Y$ is of Fano type over $Z$.
\end{lem}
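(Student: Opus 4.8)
The statement is \cite[Lemma 2.8]{PS09}, so the plan is to reproduce that argument. I use throughout the two recalled characterisations of ``$X$ is of Fano type over $Z$'': existence of a klt $(X,C)$ with $-(K_X+C)$ ample over $Z$, equivalently of a klt $(X,C)$ with $K_X+C\sim_{\R}0/Z$ and $-K_X$ big over $Z$.

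\emph{Part (1).} Pick a klt pair $(X,\Delta)$ with $-(K_X+\Delta)$ ample over $Z$ and replace $\Delta$ by $\Delta+\Gamma$ with $0\le\Gamma\sim_{\R}-(K_X+\Delta)/Z$ general; then $(X,\Delta)$ is still klt, $K_X+\Delta\sim_{\R}0/Z$, and $\Gamma$ is relatively big and not contracted by $\phi$, so $\Delta\ge\Gamma$ is big over $Z$. Put $\Delta':=\phi_*\Delta$. Since $K_X+\Delta\sim_{\R}0/Z$ is locally over $Z$ $\R$-linearly trivial, $K_{X'}+\Delta'=\phi_*(K_X+\Delta)$ is $\R$-Cartier and $\sim_{\R}0/Z$. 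On a common resolution $p\colon W\to X$, $q\colon W\to X'$, the divisor $p^*(K_X+\Delta)-q^*(K_{X'}+\Delta')$ is $q$-exceptional (it pushes forward to $0$ on $X'$) and $\sim_{\R}0/Z$, hence $\sim_{\R}0/X'$; so the negativity lemma forces it to vanish. Thus $(X,\Delta)$ and $(X',\Delta')$ are crepant, so $(X',\Delta')$ is klt. Finally $-K_{X'}\sim_{\R}\Delta'/Z$ and $\Delta'\ge\phi_*\Gamma$, which is big over $Z$ because $\Gamma$ is a general relatively ample divisor and $\phi^{-1}$ contracts no divisor; so $-K_{X'}$ is big over $Z$, and $X'$ is of Fano type over $Z$.

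\emph{Part (3).} First note that a klt pair $(Y,\Gamma)$ with $-(K_Y+\Gamma)$ nef and big over $Z$ is of Fano type over $Z$: by the relative Kodaira lemma $-(K_Y+\Gamma)\sim_{\R}A+E$ with $A$ ample over $Z$ and $E\ge0$, and for small $\lambda>0$ the pair $(Y,\Gamma+\lambda E)$ is klt with $-(K_Y+\Gamma+\lambda E)\sim_{\R}(1-\lambda)\bigl(-(K_Y+\Gamma)\bigr)+\lambda A$ ample over $Z$. Now use that $X$ is of Fano type over $Z$ to choose a klt $(X,\Delta)$ with $-(K_X+\Delta)$ ample over $Z$, and set $\Delta_s:=(1-s)B+s\Delta$ for $s\in(0,1)$; then $(X,\Delta_s)$ is klt (a convex combination with $s>0$ of an lc and a klt pair) and $-(K_X+\Delta_s)=(1-s)\bigl(-(K_X+B)\bigr)+s\bigl(-(K_X+\Delta)\bigr)$ is nef over $Z$ plus ample over $Z$, hence ample over $Z$. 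Pulling back, $K_Y+\Delta_{s,Y}=\pi^*(K_X+\Delta_s)$ with $\Delta_{s,Y}=(1-s)B_Y+s\Delta_Y$; for a $\pi$-exceptional prime $E$ we have $\mu_E\Delta_{s,Y}=(1-s)\,\mu_E B_Y+s\,\mu_E\Delta_Y$ with $\mu_E B_Y>0$ by hypothesis, so for $0<s\ll1$ every coefficient of $\Delta_{s,Y}$ is $\ge0$. Hence $(Y,\Delta_{s,Y})$ is an effective klt pair, crepant to $(X,\Delta_s)$, with $-(K_Y+\Delta_{s,Y})=-\pi^*(K_X+\Delta_s)$ the pullback of an ample$/Z$ divisor, hence nef and big over $Z$; so $Y$ is of Fano type over $Z$.

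\emph{Part (2), the main obstacle.} For a contraction $g\colon X\to Y/Z$, choose a klt $(X,\Delta)$ with $-(K_X+\Delta)$ ample over $Z$, hence ample over $Y$; absorbing a general relative complement one may arrange $K_X+\Delta\sim_{\R}g^*\!\bigl(-\tfrac1n A_Y\bigr)$ for some ample$/Z$ divisor $A_Y$ on $Y$, in particular $K_X+\Delta\sim_{\R}0/Y$. The canonical bundle formula for $g\colon(X,\Delta)\to Y$ then yields a generalised klt pair $(Y,\Delta_Y+M_Y)$ with $K_X+\Delta\sim_{\R}g^*(K_Y+\Delta_Y+M_Y)$, whence $-(K_Y+\Delta_Y+M_Y)\sim_{\R}\tfrac1n A_Y$ is ample over $Z$ and $M_Y$ is nef over $Z$. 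The delicate step — which I expect to be the main obstacle — is passing from this \emph{generalised} klt pair with ample$/Z$ anti-log-canonical divisor to an \emph{honest} klt pair exhibiting $Y$ as of Fano type over $Z$, i.e.\ disposing of the moduli $b$-divisor $M_Y$; I would do this as in \cite{PS09}, either by tracing $M$ as a nef divisor on a high model and descending a suitable effective representative after a relative MMP, or by using that the general fibre of $g$ is itself of Fano type to control the moduli part. Parts (1) and (3), together with the ``weak log Fano $\Rightarrow$ Fano type'' reduction, are routine.
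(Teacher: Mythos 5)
The paper gives no argument for this lemma at all: it is quoted verbatim from \cite[Lemma 2.8]{PS09}, so the only comparison available is between your reconstruction and the standard proof. Your parts (1) and (3) are correct and follow the usual lines: for (1), push forward a klt, relatively $\R$-trivial, relatively big complement and use the negativity lemma on a common resolution to see the two pairs are crepant; for (3), form the convex combination $(1-s)B+s\Delta$ with a klt complement $(X,\Delta)$, pull back to $Y$ (the hypothesis $\mu_E B_Y>0$ on exceptional components is exactly what makes the pulled-back boundary effective for $0<s\ll 1$), and upgrade nef and big over $Z$ to ample over $Z$ by the relative Kodaira lemma.

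Part (2), however, contains a genuine gap, and it sits exactly where you flag it. After applying the canonical bundle formula to $(X,\Delta)\to Y$ you only have a \emph{generalised} klt pair $(Y,\Delta_Y+M_Y)$ with $-(K_Y+\Delta_Y+M_Y)$ ample over $Z$, and you still have to produce an honest klt pair on $Y$ whose anti-log-canonical divisor is ample (or nef and big) over $Z$. This is not formal: the trace $M_Y$ of the moduli b-divisor is merely (b-)nef over $Z$, and a nef$/Z$ class need not be $\R$-linearly equivalent to any effective divisor, so one cannot simply take a ``general member'' of $M_Y$ plus a small ample. The missing step is precisely Ambro's effectivity theorem for the moduli part \cite[Theorem 0.2]{Am05} (arranging $\Q$-coefficients for $\Delta$ beforehand): $K_Y+\Delta_Y+M_Y\sim_{\Q}K_Y+\Theta_Y$ with $\Theta_Y\geq 0$ and $(Y,\Theta_Y)$ klt, after which $-(K_Y+\Theta_Y)$ is ample over $Z$ and (2) follows; this is also the input behind the proof in \cite{PS09}. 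Your two suggested routes (``tracing $M$ on a high model after a relative MMP'' or ``using that the general fibre is of Fano type'') are left as gestures and neither is carried out — in particular, Fano type general fibres do not by themselves control the moduli part without a theorem of this kind. So as written, the substantive part (2) remains unproved, while (1) and (3) are complete.
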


\section{Proof of main results}
In this section we will prove a more general form of Theorem \ref{thm:main} for generalised pairs.

\begin{thm}\label{thm:generalised}
  Let $d\in \N$ and let $\epsilon\in \R^{>0}$. Then there is $\delta\in \R^{>0}$ depending only on $d,\epsilon$ satisfying the following. Let $(X,B+M)$ be a generalised pair with data $X'\xrightarrow{\phi} X\xrightarrow{f} Z$ and $M'$ where $f:X\to Z$ is a contraction and let $V\subseteq Z$ be a closed subvariety such that 
  \begin{itemize}
    \item $\dim X-\dim Z=d$,
    \item $K_X+B+M\sim_{\R} 0/Z$,
    \item $X$ is of Fano type over $Z$, and
    \item $(X,B+M)$ is generalised $\epsilon$-lc along $f^{-1} V.$
  \end{itemize}
  Then the generalised pair $(Z,B_Z+M_Z)$ given by the canonical bundle formula 
  $$K_X+B+M\sim_{\R} f^*(K_Z+B_Z+M_Z)$$
  is generalised $\delta$-lc along $V$.
\end{thm}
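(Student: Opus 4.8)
\emph{Reduction to a trait.} The plan is to reduce Theorem~\ref{thm:generalised} to a statement over a trait --- the spectrum of a discrete valuation ring --- and then to run an induction on the relative dimension $d$, the case $d=1$ being \cite[Theorem~1.4]{Ch22}. Since being generalised $\delta$-lc along $V$ means $a(E,Z,B_Z+M_Z)\geq\delta$ for every prime divisor $E$ over $Z$ with $\cent_Z E\subseteq V$, fix such an $E$; we must bound its log discrepancy below by some $\delta=\delta(d,\epsilon)>0$. Passing to a birational model $\sigma\colon Z'\to Z$ on which $E$ is a divisor, together with a compatible birational model of $X$ over $Z'$ chosen --- using Lemma~\ref{lem:preserve} and \cite{BCHM10} --- so that the total space remains of Fano type over $Z'$, the crepant transform of $(X,B+M)$ remains a generalised pair, and that transform is generalised $\epsilon$-lc over the preimage of a neighbourhood of the generic point of $\cent_Z E$, we may assume that $E$ is a prime divisor on $Z$ with $\cent_Z E\subseteq V$. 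By the compatibility $K_{Z'}+B_{Z'}+M_{Z'}=\sigma^*(K_Z+B_Z+M_Z)$ of the discriminant and moduli b-divisors, $a(E,Z,B_Z+M_Z)=1-\mu_E B_Z=t_E$, the generalised log canonical threshold of $f^*E$ with respect to $(X,B+M)$ over the generic point of $E$. Localising $X$ and $Z$ at that generic point, we are reduced to: a contraction $g\colon Y\to S$ with $S=\Spec R$ for a DVR $R$ with closed point $s$; $\dim Y-1=d$; a generalised pair $(Y,B_Y+M_Y)$ with $K_Y+B_Y+M_Y\sim_\R 0/S$; $Y$ of Fano type over $S$; and $(Y,B_Y+M_Y)$ generalised $\epsilon$-lc over $s$ --- the last point because the special fibre $Y_s=g^*s$ maps into $f^{-1}V$, where the vertical hypothesis applies. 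It remains to bound below by $\delta(d,\epsilon)$ the generalised log canonical threshold $t$ of $Y_s$ with respect to $(Y,B_Y+M_Y)$.

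\emph{The trait case.} Write $Y_s=g^*s=\sum_i m_iF_i$ with $m_i\in\Z^{>0}$. For a prime divisor $D$ over $Y$ whose centre is contained in $Y_s$ we have
$$a(D,Y,B_Y+tY_s+M_Y)=a(D,Y,B_Y+M_Y)-t\cdot\operatorname{ord}_D(Y_s),$$
so $t$ is the infimum of $a(D,Y,B_Y+M_Y)/\operatorname{ord}_D(Y_s)$ over all such $D$, where every numerator is $\geq\epsilon$ and every denominator $\geq1$; this infimum is attained, either by a component $F_i$ --- giving $t=(1-\mu_{F_i}B_Y)/m_i\geq\epsilon/m_i$ --- or by a divisor exceptional over $Y$. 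Hence it suffices to (a) bound the multiplicities $m_i$ above by some $m(d,\epsilon)$, and (b) bound below, over divisors $D$ exceptional over $Y$ with centre in $Y_s$, the ratio $a(D,Y,B_Y+M_Y)/\operatorname{ord}_D(Y_s)$.

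\emph{Handling (a) and (b).} For (a): after a $\Q$-factorial dlt modification and an MMP over $S$, a combination of boundedness of complements \cite{Bi19} over $S$ --- which requires only that the dimension $d+1$ be bounded, not that the general fibre be bounded --- and generalised adjunction of $(Y,B_Y+M_Y)$ to a component of $Y_s$, which carries a $d$-dimensional $\epsilon'$-lc generalised log Calabi--Yau pair of Fano type with $\epsilon'=\epsilon'(d,\epsilon)$, should bound the $m_i$. For (b): given an exceptional $D$ with centre $W\subseteq Y_s$, extract $D$ on a model of $Y$, apply generalised adjunction of $(Y,B_Y+M_Y)$ to a component of $Y_s$ through $W$ to drop the relative dimension, and invoke the inductive hypothesis together with boundedness of complements; substituting the bounds from (a) and (b) into the formula for $t$ yields $t\geq\delta(d,\epsilon)$. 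The base case $d=1$ is \cite[Theorem~1.4]{Ch22} (with an optimal $\delta$; for toric fibrations see also \cite[Theorem~1.8]{Ch23}).

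\emph{Main obstacle.} The crux, and the point distinguishing this from \cite{Bi23}, is that the general fibre of $f$ need not lie in a bounded family, so the boundedness of Fano varieties of \cite{Bi19,Bi21} and the strategy of \cite{Bi23} are not available directly. What replaces it is that, after the reduction to a trait, all the singularities that matter occur over the closed point $s$, that is, over (the image of) $V$, exactly where the vertical $\epsilon$-lc hypothesis holds; and the components of the special fibre carry, via generalised adjunction, lower-dimensional $\epsilon'$-lc generalised pairs that are again of Fano type, which is what makes the induction on $d$ possible. I expect the bulk of the work to be carrying out this induction while (i) controlling how $\epsilon$ degrades under dlt modifications, the MMP, generalised adjunction, and the extraction of log canonical places, so that the final $\delta$ still depends only on $d$ and $\epsilon$, and (ii) keeping track of the moduli b-divisor at each step; boundedness of complements \cite{Bi19} over the base is the key external input standing in for boundedness of the general fibre.
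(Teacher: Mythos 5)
There is a genuine gap at the heart of your sketch: steps (a) and (b) of the trait case are exactly the hard content of the theorem, and the tools you propose do not deliver them. Generalised adjunction to a component $F_i$ of the special fibre $Y_s$ requires giving $F_i$ coefficient one, and the resulting pair on $F_i$ is then only (generalised) lc, not $\epsilon'$-lc for any $\epsilon'(d,\epsilon)>0$: near the intersections of $F_i$ with the other fibre components (and along the divisors where $B_Y$ has large coefficient) the restricted log discrepancies are not bounded below, so no $\epsilon'$-lc log Calabi--Yau structure on $F_i$ is produced. Moreover $F_i$ maps to the closed point $s$, so it is not a fibration of smaller relative dimension over the base, and the inductive hypothesis (which concerns contractions $X\to Z$ of relative dimension $<d$) does not apply to it. Finally, existence of $n$-complements over the DVR \cite{Bi19} gives an lc pair $(Y,B^+)$ with $n(K_Y+B^+)\sim 0/S$, but this by itself says nothing about the multiplicities $m_i$ of $Y_s$ --- bounding those multiplicities is essentially equivalent to the statement being proved, and in \cite{Bi23} it required boundedness of the general fibre, which is unavailable here. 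So the phrases ``should bound the $m_i$'' and ``invoke the inductive hypothesis together with boundedness of complements'' conceal precisely the missing idea. (A smaller but real issue: in your reduction to a divisor on $Z$, the crepant transform of $(X,B+M)$ on a model over $Z'$ is in general only a sub-pair, and keeping simultaneously an effective boundary, Fano type over $Z'$, and the $\epsilon$-lc condition along the relevant locus requires the explicit construction the paper carries out in Case 2 of Lemma \ref{lem:base-dimension-1}.)

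The paper's route is different and hinges on an averaging trick that reduces the vertical statement to the already-proven global theorem \cite[Theorem 1.1]{Bi23}, rather than replacing boundedness of fibres by complements. After reducing generalised pairs to pairs (Lemma \ref{lem:pure-to-generalised}) and reducing to $\dim Z=1$ by cutting with general hyperplane sections plus Kawakita's inversion of adjunction, and after a careful treatment of divisors exceptional over $Z$, the key step (Lemma \ref{lem:epsilon-lc}) is: extract the relevant divisor $T$ with $a(T,X,B)=\epsilon$, run an MMP on $-K_Y$ over $Z$ (which is an isomorphism over $Z\setminus\{z\}$ since $-K_X$ is nef there), take a general $R_{Y'}\sim_{\Q}-K_{Y'}/Z$ and push it down to get a klt pair $(X,R)$ with $K_X+R\sim_{\Q}0/Z$ that is $\epsilon$-lc over $Z\setminus\{z\}$ and satisfies $a(T,X,R)\leq 1$; then $(X,\tfrac12 R+\tfrac12 B)$ is globally $\tfrac{\epsilon}{2}$-lc and \cite[Theorem 1.1]{Bi23} bounds $\mu_T f^*z$. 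That construction of a global $\tfrac{\epsilon}{2}$-lc complement from the merely vertical hypothesis is the idea your proposal lacks, and without it (or a genuine substitute) the induction you outline does not close.
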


We reduce Theorem \ref{thm:generalised} to Theorem \ref{thm:main}.
\begin{lem}\label{lem:pure-to-generalised}
Let $d\in \N$. Assume Theorem \ref{thm:main} holds in relative dimension $d$. Then Theorem \ref{thm:generalised} holds in relative dimension $d$.
\end{lem}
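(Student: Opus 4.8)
The plan is to reduce the generalised pair situation to the case of an ordinary pair by absorbing the nef part $M$ into the boundary via a sufficiently general auxiliary divisor, and then invoke Theorem \ref{thm:main}. The key technical device is the standard trick that a nef b-divisor which is nef over the base can, after passing to a high model, be represented by an $\R$-divisor whose general member is klt-friendly: concretely, since $X$ is of Fano type over $Z$ and hence over the generic point of $Z$, and since $M'$ is nef over $Z$, one should be able to find (on a suitable resolution, then push down) an effective $\R$-divisor $G' \sim_{\R} M'/Z$ — or rather a small perturbation — such that replacing $M$ by $G := \phi_* G'$ produces an ordinary pair $(X, B+G)$ with $K_X + B + G \sim_{\R} 0/Z$ and $(X,B+G)$ still $\epsilon'$-lc along $f^{-1}V$ for some $\epsilon'$ depending only on $\epsilon$ (and $d$), while the discriminant/moduli data on $Z$ changes in a controlled way.

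First I would set up the model: replace $X'$ by a high log resolution so that $M'$ is nef$/Z$ and the b-divisor $\mathbf{M}$ descends to $X'$. Because $X$ is of Fano type over $Z$, so is $X'$ over $Z$ (using Lemma \ref{lem:preserve}(3), after checking exceptional coefficients, or Lemma \ref{lem:preserve}(1)); in particular $-K_{X'}$ is big over $Z$ up to the boundary, and more importantly $X'$ is a Mori dream space over $Z$. The point is then to choose a general $0 \le \Delta' \sim_{\R} M'/Z$ with small coefficients: scaling, one picks $t>0$ small (depending only on $\epsilon$) and an effective $\Delta' \sim_{\R} t M'/Z$ general enough that $(X', B' + \Delta' + (1-t)M')$ — where $B'$ is the pullback of $B+M$ minus the $M'$ part — is still sub-$\frac{\epsilon}{2}$-lc along the preimage of $f^{-1}V$. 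Pushing forward to $X$ gives a genuine pair $(X, B + \Delta)$ with $\Delta = \phi_*\Delta' + \phi_*((1-t)M')$... more cleanly, one keeps a residual nef part only if needed; the cleanest route is to take $t=1$, i.e. replace $M'$ entirely by a general effective $\Delta' \sim_\R M'/Z$, provided one can guarantee the resulting pair stays uniformly lc along $f^{-1}V$. That uniform control is exactly where Fano-type-ness over $Z$ and the $\epsilon$-lc hypothesis along $f^{-1}V$ combine: a general member of a base-point-free-up-to-MMP linear system does not drop log discrepancies below a fixed bound on the locus we care about.

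Having produced an ordinary pair $(X, B_0)$ with $B_0 = B + \Delta$, $K_X + B_0 \sim_\R 0/Z$, $X$ of Fano type over $Z$, and $(X,B_0)$ being $\epsilon_0$-lc along $f^{-1}V$ with $\epsilon_0 = \epsilon_0(d,\epsilon) > 0$, I would apply Theorem \ref{thm:main} in relative dimension $d$ to get that the canonical bundle formula pair $(Z, B_{0,Z} + M_{0,Z})$ is generalised $\delta_0$-lc along $V$, with $\delta_0 = \delta_0(d,\epsilon_0) = \delta(d,\epsilon)$. Finally I would compare the canonical bundle formula data of $f:(X,B+M)\to Z$ with that of $f:(X,B_0)\to Z$: since $B_0 \ge B$ and the difference $\Delta$ is $\R$-linearly trivial over $Z$ and was chosen general (so it contributes nothing to the discriminant divisor over any fixed prime divisor of $Z$, as its pullback to a resolution of $X$ over the generic point of that prime divisor has no vertical components through the relevant log canonical places), the discriminant divisor $B_Z$ for $(X,B+M)$ is $\le B_{0,Z}$, and the moduli parts agree up to $\R$-linear equivalence; hence generalised $\delta_0$-lc along $V$ for the $B_0$-pair forces generalised $\delta_0$-lc along $V$ for $(Z, B_Z+M_Z)$.

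The main obstacle I expect is the uniformity in the choice of $\Delta'$: a priori a general member of $|M'/Z|_\R$ could create new log canonical (or low log discrepancy) centers meeting $f^{-1}V$, and one must show this does not happen — or happens only with a loss depending solely on $d$ and $\epsilon$ — without any boundedness of the general fiber (which is unavailable here, since we only assume $\epsilon$-lc \emph{vertically}). The resolution is presumably to work with the b-divisor $\mathbf{M}$ on a model where it descends and is \emph{semiample}$/Z$ after an MMP (using the Fano type hypothesis and the basepoint-free theorem for the generalised pair), so that a general member in a multiple is genuinely transverse to the fixed configuration of divisors over $f^{-1}V$; then Bertini-type arguments control the log discrepancies along $f^{-1}V$ with an explicit loss, e.g. replacing $\epsilon$ by $\epsilon/2$. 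Another, more delicate point is that $V$ may be deep in $Z$ and $f^{-1}V$ correspondingly complicated, so one must be careful that "general over $Z$" genuinely suffices along the (possibly non-reduced, reducible) fiber $f^{-1}V$; this is handled by choosing the resolution of $X$ to also resolve $f^{-1}V$ and making $\Delta'$ general with respect to that combined snc configuration.
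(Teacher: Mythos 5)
Your overall strategy (absorb the nef part into the boundary by an effective perturbation, apply Theorem \ref{thm:main}, then compare the two canonical bundle formulas) is the same as the paper's, but the step you yourself identify as the main obstacle is exactly where the proposal breaks down, and the fix you sketch does not work. First, your claim that $X'$ (a high model where the b-divisor descends) is again of Fano type, hence a Mori dream space, over $Z$ is false in general: a resolution of a Fano type variety extracts divisors of positive log discrepancy, so neither part (1) nor part (3) of Lemma \ref{lem:preserve} applies (think of a blow-up of $\PP^2$ at many points). Consequently there is no reason that $M'$, which is merely nef over $Z$ on $X'$, is semiample over $Z$, big over $Z$, or even $\R$-linearly equivalent over $Z$ to any effective divisor; and since $M'$ is already nef$/Z$, running an ``MMP on the nef part'' does nothing, and no basepoint-free theorem applies to $M'$ alone. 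So the ``take $t=1$ and choose a general $\Delta'\sim_\R M'/Z$'' route, and the proposed repair via MMP plus basepoint-freeness, both fail: the nef part of a generalised pair can be rigid or have no effective representative at all, in which case there is no ``general member'' whose addition preserves discrepancies along $f^{-1}V$.

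The paper's mechanism is genuinely different at this point and is what you are missing: after shrinking around $V$ (Lemma \ref{lem:lc}) and passing to a $\Q$-factorial model, one first runs an MMP on $B+M\sim_\R -K_X$ over $Z$ on $X$ itself (which \emph{is} Fano type over $Z$), so that $B+M$ becomes big and nef over $Z$; then, for $s<1$ close to $1$, one chooses $0\le L\sim_\R sM'+(1-s)\phi^*(B+M)/Z$. The bigness needed for a Bertini-type general choice with an arbitrarily small loss of log discrepancy along $\phi^{-1}f^{-1}V$ is supplied by the summand $(1-s)\phi^*(B+M)$, not by $M'$; the scaling by $s$ (using that $X$ is klt, so $(X,sB+sM)$ is generalised klt) guarantees sub-kltness and that the pushdown boundary $\Delta\ge sB\ge 0$ is effective. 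Finally, your comparison of the two canonical bundle formulas is asserted with an unjustified reason (``the absorbed divisor is general, so it contributes nothing to the discriminant''): since the absorbed divisor contains a fixed effective piece and replaces $M'$ only up to $\R$-linear equivalence, one cannot conclude $B_Z\le B_{0,Z}$ directly; the paper instead compares, on a fixed resolution mapping to $Z'$, the lc threshold of $f'^*D$ with respect to $(X',B')$ (which computes the generalised discriminant, as $\mathbf M$ descends to $X'$) with the one for $(X',\Delta')$, and the error is controlled because $B'-B'_s=(1-s)(\phi^*(B+M)-M')$ is small for $s$ close to $1$. Without the big-and-nef mixing step and this threshold comparison (or an argument that an effective representative $G'\ge 0$ of $M'$ exists on $X'$ with controlled singularities, which is not available), the proof is incomplete.
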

\begin{proof}
Shrinking $Z$ around $V$, by Lemma \ref{lem:lc} we may assume that $(X,B+M)$ is generalised lc. By \cite[Lemma 4.5]{BZ16}, there exists a normal variety $Y$ with a projective birational morphism $Y\to X$ so that $Y$ is $\Q$-factorial and each exceptional$/X$ prime divisor on $Y$ has log discrepancy $=0$ with respect to $(X,B+M)$. Replacing $X$ by $Y$, we may assume that $X$ is $\Q$-factorial.  

By running an MMP on $B+M$ over $Z$, we may assume that $B+M$ is big and nef over $Z$. Pick any resolution $Z'\to Z$ and a prime divisor $D$ on $Z'$ with $\cent_Z D\subseteq V$. Replacing $X'$ with a higher resolution, we can assume that the induced map $f':X'\dashrightarrow Z'$ is a morphism. Let $K_{X'}+B'+M'$ be the pullback of $K_X+B+M$ to $X'$. Then $(X',B')$ is sub-$\epsilon$-lc along $\phi^{-1}f^{-1} V$. We need to show that the lc threshold of $f'^*D$ with respect to $(X',B')$ over the generic point of $D$, say $t$, is bounded from below away from zero.

Pick $s\in (0,1)$ sufficiently close to 1. We can write
$$K_{X'}+B'_s+sM'=\phi^*(K_X+sB+sM).$$
Then $B'-B'_s\geq 0$ has sufficiently small coefficients. As $X$ is of Fano type over $Z$, $X$ is klt, so $(X,sB+sM)$ is generalised klt. It follows that $(X',B'_s)$ is sub-klt. Moreover, $(X',B'_s)$ is sub-$\epsilon$-lc along $\phi^{-1}f^{-1} V$. As $B+M$ is big and nef over $Z$ and as $M'$ is nef over $Z$, we can find 
$$0\leq L\sim_{\R} sM'+(1-s)\phi^*(B+M)/Z$$
such that $(X',\Delta':=B'_s+L)$ is sub-$\frac{\epsilon}{2}$-lc along $\phi^{-1}f^{-1} V$ and is sub-klt. Moreover, we have 
\begin{itemize}
\item $K_{X'}+\Delta'\sim_{\R}K_{X'}+B'+M'\sim_{\R} 0/Z$, and 
\item $\Delta'\geq B'_s$, in particular $\Delta\geq sB\geq 0$ where $\Delta$ is the pushdown of $\Delta'$ to $X$.
\end{itemize}
Therefore, $(X,\Delta)$ is $\frac{\epsilon}{2}$-lc along $f^{-1}V$ and $K_X+\Delta\sim_{\R} 0/Z$. Applying Theorem \ref{thm:main} to $(X,\Delta)\to Z$, we deduce that the lc threshold of $f'^*D$ with respect to $(X',\Delta')$ over the generic point of $D$, say $u$, is bounded from below by a fixed positive number depending only on $d,\epsilon$. By construction, we have either $u\leq t$ or $u>t$ but $u-t$ is sufficiently small, which implies that $t$ is also bounded from below away from zero.
\end{proof}

To prove Theorem \ref{thm:main}, we need a couple of lemmas.

\begin{lem}\label{lem:factor}
Let $d\in \N$. Assume Theorem \ref{thm:main} holds in relative dimension $<d$. Then the theorem holds in relative dimension $d$  when $f:X\to Z$ can be factored as $X\xrightarrow{h}Y \xrightarrow{g}Z$ where $h,g$ are contractions of relative dimension $<d$.
\end{lem}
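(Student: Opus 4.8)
The plan is to run the canonical bundle formula in two stages, matching the factorization $X \xrightarrow{h} Y \xrightarrow{g} Z$, and then invoke the inductive hypothesis (Theorem \ref{thm:main} in relative dimension $<d$) twice. First I would like to apply the canonical bundle formula to $h:X\to Y$. Since $X$ is of Fano type over $Z$, it is of Fano type over $Y$ by Lemma \ref{lem:preserve}(2) applied to $X\to Y$ (note $Y\to Z$ and being Fano type over $Z$ gives Fano type over $Y$ after the standard argument), and $K_X+B\sim_\R 0/Z$ certainly implies $K_X+B\sim_\R 0/Y$. So we get a generalised pair $(Y,B_Y+M_Y)$ with $K_X+B\sim_\R h^*(K_Y+B_Y+M_Y)$. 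The relative dimension of $h$ is $<d$, and $(X,B)$ is $\epsilon$-lc along $h^{-1}(g^{-1}V)=f^{-1}V$, which contains $h^{-1}(W)$ for $W:=g^{-1}V\subseteq Y$; hence by Theorem \ref{thm:main} in relative dimension $<d$, the pair $(Y,B_Y+M_Y)$ is generalised $\epsilon'$-lc along $W=g^{-1}V$, where $\epsilon'>0$ depends only on $\dim h$ and $\epsilon$, hence only on $d,\epsilon$.

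Next I would apply the (generalised) canonical bundle formula to $g:(Y,B_Y+M_Y)\to Z$. Here I need to check the hypotheses of Theorem \ref{thm:generalised} in relative dimension $\dim g<d$: we have $Y$ of Fano type over $Z$ by Lemma \ref{lem:preserve}(2), $K_Y+B_Y+M_Y\sim_\R 0/Z$ (this follows since $K_X+B\sim_\R 0/Z$ and $K_X+B\sim_\R h^*(K_Y+B_Y+M_Y)$, pushing down along $h$, using $h_*\mathcal{O}_X=\mathcal{O}_Y$), and $(Y,B_Y+M_Y)$ is generalised $\epsilon'$-lc along $g^{-1}V$ from the previous paragraph. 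The key compatibility I must verify is that the canonical bundle formula is transitive: the generalised pair on $Z$ obtained by the two-step process (first $X\to Y$, then $Y\to Z$) agrees with the one obtained directly from $X\to Z$. This is standard — the discriminant divisors match because the lc thresholds $t_D$ computed over the generic point of a prime divisor $D\subseteq Z$ are the same whether one goes directly or factors through $Y$ (one can check this over the generic point of $D$, where everything is a fibration, and use that log canonical thresholds are computed the same way in a tower), and the moduli parts then match up to $\R$-linear equivalence and agree as b-divisors after passing to high models. Granting this, Theorem \ref{thm:generalised} in relative dimension $<d$ — which holds by Lemma \ref{lem:pure-to-generalised} together with the assumed Theorem \ref{thm:main} in relative dimension $<d$ — yields that $(Z,B_Z+M_Z)$ is generalised $\delta$-lc along $V$ with $\delta>0$ depending only on $\dim g$ and $\epsilon'$, hence only on $d,\epsilon$.

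The main obstacle I anticipate is the transitivity/compatibility of the canonical bundle formula under the factorization, i.e. that building the discriminant and moduli b-divisors in two steps gives the same generalised pair $(Z,B_Z+M_Z)$ as the one-step construction. While this is morally well known, writing it cleanly requires care: one must argue at the level of b-divisors (choosing sufficiently high resolutions of $X$, $Y$, $Z$ simultaneously so that all relevant maps are morphisms), confirm the generalised-lc hypothesis needed to even run the second canonical bundle formula is inherited, and match the discriminant coefficients $1-t_D$ by a local computation over each codimension-one point of $Z$. A secondary point to be careful about is that after the first step $(Y,B_Y+M_Y)$ is only a generalised pair, so the second step genuinely needs the generalised version Theorem \ref{thm:generalised}, not just Theorem \ref{thm:main}; this is why the lemma is phrased and used in tandem with Lemma \ref{lem:pure-to-generalised}. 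Everything else — the Fano type propagation, the relative linear equivalences, and the bookkeeping of how $\delta$ depends only on $d$ and $\epsilon$ — is routine.
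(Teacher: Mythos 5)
Your proposal matches the paper's proof essentially step for step: apply the inductive hypothesis to $h:X\to Y$ to get that $(Y,B_Y+M_Y)$ is generalised $\epsilon'$-lc along $g^{-1}V$, then apply Theorem \ref{thm:generalised} (via Lemma \ref{lem:pure-to-generalised}) to $g:(Y,B_Y+M_Y)\to Z$, and conclude by transitivity of the canonical bundle formula. The one point you flag as the main obstacle---that the two-step construction yields the same generalised pair $(Z,B_Z+M_Z)$ as the direct one---is handled in the paper simply by citing \cite[Lemma 2.1]{BC21}, so your argument is correct and takes the same route.
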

\begin{proof}
{\color{black} As $X$ is of Fano type over $Z$, there is a klt pair $(X,\Gamma)$ such that $K_X+\Gamma\sim_{\R} 0/Z$ and $-K_X$ is big over $Z$, so $K_X+\Gamma\sim_{\R} 0/Y$ and $-K_X$ is big over $Y$. It follows that $X$ is of Fano type over $Y$.} By assumption, if we apply the canonical bundle formula
  $$K_{X}+B \sim_{\R} h^*(K_{Y}+B_{Y}+M_{Y}),$$
then $(Y,B_{Y}+M_{Y})$ is generalised $\delta'$-lc along $g^{-1}V$ for some fixed $\delta'>0$. 

{\color{black} By Lemma \ref{lem:preserve} (2), $Y$ is of Fano type over $Z$.}
By Lemma \ref{lem:pure-to-generalised}, we can apply Theorem \ref{thm:generalised} to
$$(Y,B_{Y}+M_{Y})\to Z,$$ to deduce that $(Z,B_Z+M_Z)$ is  generalised $\delta$-lc along $V$ for some fixed $\delta>0$, where $(Z,B_Z+M_Z)$ is the generalised pair given by the canonical bundle formula
$$K_Y+B_Y+M_Y\sim_{\R} g^*(K_{Z}+B_{Z}+M_{Z}).$$

By \cite[Lemma 2.1]{BC21}, $(Z,B_Z+M_Z)$ is also the generalised pair given by the canonical bundle formula of $(X,B)\to Z$. So we are done.
\end{proof}
\begin{lem}\label{lem:base-dimension-1}
Assume Theorem \ref{thm:main} holds when $\dim Z=1$. Then the theorem holds in general.
\end{lem}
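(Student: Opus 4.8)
The plan is to reduce the general case to the case $\dim Z = 1$ by a careful cutting-down argument. Suppose Theorem \ref{thm:main} holds whenever $\dim Z = 1$, and let $(X,B)\to Z$ and $V\subseteq Z$ be as in the statement with $\dim Z$ arbitrary. We must bound from below the log discrepancies $a(E, Z, B_Z+M_Z)$ for all prime divisors $E$ over $Z$ with $\cent_Z E \subseteq V$. Fix such an $E$, with center $W := \cent_Z E \subseteq V$. The strategy is to find a general complete intersection curve (or a general one-parameter family of such) passing through enough of the geometry so that, after base change to this curve and restriction, the value $a(E, Z, B_Z+M_Z)$ is computed — or at least bounded below — by a log discrepancy occurring in a relative-dimension-$d$ fibration over a base of dimension $1$.

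Concretely, first I would pass to a log resolution $Z'\to Z$ on which $E$ is a divisor, and (using Lemma \ref{lem:preserve}(1) and the standard compatibility of the canonical bundle formula under base change) arrange the corresponding $X' \to Z'$ with $X'$ still of Fano type over $Z'$ and $(X', B')$ still $\epsilon$-lc along the preimage of the preimage of $V$ — here one uses that $\epsilon$-lc-ness along $f^{-1}V$ only concerns divisors with vertical center, which is preserved. Then I would slice $Z'$ by general hyperplane sections $H_1, \dots, H_{\dim Z - 1}$ so that $C' := H_1 \cap \cdots \cap H_{\dim Z -1}$ is a general curve meeting $E$; by general position and inversion of adjunction / Bertini-type statements, the coefficient of the point $E \cap C'$ in the discriminant divisor of the restricted fibration $X'_{C'} \to C'$ equals (or is controlled by) the coefficient of $E$ in $B_{Z'}$, i.e. $1 - a(E, Z, B_Z + M_Z)$, up to the contribution of the moduli part which is handled by its nefness and the fact that restricting a nef b-divisor to a general curve is harmless. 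The restricted family $X'_{C'} \to C'$ again has relative dimension $d$, is of Fano type over $C'$ (Lemma \ref{lem:preserve} applied to the restriction, or directly), satisfies $K + B \sim_{\R} 0$ over $C'$, and is $\epsilon$-lc along the fiber over the relevant point of $C'$ — the last being the crux, since we only know $\epsilon$-lc-ness along $f^{-1}V$ and must ensure the general slice still sees that fiber. Then the $\dim Z = 1$ case gives the desired $\delta$.

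The main obstacle I expect is precisely the bookkeeping of the moduli b-divisor $M_Z$ and ensuring that the $\epsilon$-lc hypothesis along $f^{-1}V$ genuinely descends to the sliced family. For the discriminant part, cutting by general hyperplanes is classical and behaves well; but $M_Z$ is only a nef b-divisor, and one must check that $a(E, Z, B_Z + M_Z)$ — which mixes $B_Z$ and $M_Z$ at the level of the birational model — is correctly read off after restriction to $C'$. This should follow from the base-change compatibility of the canonical bundle formula (the moduli part restricts to the moduli part of the restricted fibration) together with the fact that for a general curve $C'$ through a general point of $W$, the divisor $E$ restricts to a point and the generalised pair structure $(C', B_{C'} + M_{C'})$ has the same log discrepancy at that point as $(Z, B_Z+M_Z)$ has at $E$ — here one invokes that a general member of a free linear system does not create or destroy log canonical centers, applied to both the boundary and (a nef representative of) the moduli part on a high model. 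The second delicate point is that the center $W$ of $E$ might itself be contained in the image $f(\text{non-}\epsilon\text{-lc locus})$ in a way that a general slice misses; but since the hypothesis is $\epsilon$-lc along all of $f^{-1}V$ and $W \subseteq V$, a general curve through a general point of $W$ still lies in $V$, so the fiber over the marked point is inside $f^{-1}V$, and restricting an $\epsilon$-lc pair to a general fiber-saturated slice preserves $\epsilon$-lc-ness along that fiber by adjunction-type arguments. Once these compatibilities are in place, the reduction is immediate: apply the hypothesis to $X'_{C'} \to C'$ with the marked point playing the role of $V$, obtaining $\delta = \delta(d,\epsilon)$ independent of everything else.

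I would organize the write-up as: (i) reduce to $E$ a divisor on a fixed resolution $Z'$ and set up $X' \to Z'$; (ii) choose general hyperplane sections and invoke Bertini/general-position to pass to a curve $C' \subseteq Z'$ with $E|_{C'}$ a reduced point and all the hypotheses of Theorem \ref{thm:main} satisfied for $X'_{C'} \to C'$, with $V$ replaced by that point; (iii) verify via base-change compatibility of the canonical bundle formula that $a(E|_{C'}, C', B_{C'}+M_{C'}) = a(E, Z, B_Z+M_Z)$; (iv) apply the $\dim Z = 1$ case. Combined with Lemma \ref{lem:factor} — which, via an MMP, lets one reduce the general $\dim Z = 1$ situation to towers of relative dimension $< d$, feeding the induction on $d$ — and Lemma \ref{lem:pure-to-generalised}, this completes the reduction scheme for the whole theorem.
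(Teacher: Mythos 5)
There is a genuine gap, and it is exactly the case your set-up quietly assumes away: prime divisors $E$ that are exceptional over $Z$. Your step (i) says you pass to a resolution $Z'\to Z$ on which $E$ is a divisor and, citing Lemma \ref{lem:preserve}(1) and ``base-change compatibility,'' arrange a fibration $X'\to Z'$ that is still of Fano type with an effective boundary that is still $\epsilon$-lc along the preimage of $V$. No such arrangement comes for free. The natural candidate (the main component of $X\times_Z Z'$, or any model dominating it) carries only the \emph{crepant} pullback of $K_X+B$, which is in general a sub-boundary with negative coefficients, and the total space is in general not of Fano type over $Z'$; Lemma \ref{lem:preserve}(1) concerns birational modifications of $X$ over the \emph{same} base and says nothing about base change along $Z'\to Z$. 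Since $a(E,Z,B_Z+M_Z)$ is by definition an lc threshold of $f'^*E$ with respect to this sub-pair, the whole difficulty is to replace the sub-pair by an honest pair, on a Fano type fibration over a model of $Z$ where $E$ is an actual divisor, whose boundary dominates the crepant pullback (so that the threshold can only drop) while remaining $\epsilon$-lc along the preimage of $V$. The paper's proof spends its entire Case 2 on precisely this: it first makes $Z$ $\Q$-factorial (via a $K_X$-MMP, with Lemma \ref{lem:factor} absorbing the case where the MMP raises the base dimension), extracts only $D$ by $\pi:Z''\to Z$ with $\mathrm{Exc}(\pi)=D$ using a klt pair on $Z$, then on a log resolution $W$ of $X$ defines $\Delta_W$ with coefficient $1$ on exceptional divisors not mapping into $V$ and $1-\epsilon$ on those mapping into $V$, runs several relative MMPs to reach $(X',\Delta_{X'})\to Z'$ with $Z'\to Z''$ birational, checks Fano type of $X'$ over $Z'$ via Lemma \ref{lem:preserve}(3), and finally uses $\Delta_{X'}\geq$ (crepant pullback of $B$) on a common resolution to transfer the lc-threshold bound back to $(X,B)$. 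None of this is replaced by the compatibilities you invoke, and without it the reduction to $\dim Z=1$ does not go through for exceptional centers.

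For divisorial centers your slicing idea is essentially the paper's Case 1, except that the paper cuts by \emph{one} general hyperplane at a time and runs an induction on $\dim Z$, transferring the bound back with Kawakita's inversion of adjunction applied to $(X,B+f^*H+tf^*D)$; this deliberately avoids ever restricting the generalised structure $(Z,B_Z+M_Z)$ or the moduli b-divisor to a curve. Your step (iii), asserting $a(E|_{C'},C',B_{C'}+M_{C'})=a(E,Z,B_Z+M_Z)$ after cutting all the way to a curve, is stronger than what is needed and not justified as stated (the moduli part only restricts well on sufficiently high models and after careful choices); working upstairs with lc thresholds on $X$ and inversion of adjunction, as the paper does, sidesteps this. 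Finally, Lemma \ref{lem:factor} does not serve to ``reduce the $\dim Z=1$ situation to towers of relative dimension $<d$'' in the way you describe at the end; in this lemma it is used inside the $\Q$-factorialization step, when the auxiliary MMP terminates with a Mori fibre space whose base strictly dominates $Z$ in dimension.
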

\begin{proof}
We prove the theorem by induction on the dimension of $Z$. Suppose the theorem holds when $\dim Z\leq n-1$ for some $n\geq 2$. We will show that the theorem holds when $\dim Z=n$. The case when $V=Z$ was proved in \cite[Theorem 1.1]{Bi23}, so we may assume that $V\neq Z$. Shrinking $Z$ around $V$, by Lemma \ref{lem:lc} we may assume that $(X,B)$ is lc. Then $(Z,B_Z+M_Z)$ is generalised lc.

Next we reduce the theorem to the case that $Z$ is $\Q$-factorial. Replacing $(X,B)$ by a $\Q$-factorial dlt model, we may assume that $X$ is $\Q$-factorial. Run an MMP on $K_X$ over $Z$ which ends with a Mori fiber space $\hat{X}\to \hat{Z}$. Then $\hat{Z}$ is  $\Q$-factorial. If $\dim \hat{Z}>\dim Z$, by induction on relative dimension, the theorem follows from Lemma \ref{lem:factor}. So we can assume that $\hat{Z}\to Z$ is birational. Replacing $X$ with $\hat{X}$ and $Z$ with $\hat{Z}$, we may assume that $Z$ is $\Q$-factorial.

Let $D$ be a prime divisor over $Z$ such that $\cent_Z D\subseteq V$ and $a(D,Z,B_Z+M_Z)<1$. We need to show $a(D,Z,B_Z+M_Z)$ is bounded from below by a positive constant depending only on $d,\epsilon$. We divide the rest of the proof in two cases.

\textbf{Case 1: $D$ is a prime divisor on $Z$.} It suffices to show that the lc threshold of $f^*D$ over the generic point of $D$ with respect to the pair $(X,B)$ is bounded from below by a fixed positive number depending only on $d,\epsilon$. 

Pick a general hyperplane section $H\subseteq Z$ (which would intersect $D$ as $\dim Z>1$) and let $G=f^*H$. This ensures that  $(X,B+G)$ is $\epsilon$-lc along $f^{-1}V$ (here we use the assumption that $V\neq Z$).
Letting
$$K_G+B_G=(K_X+B+G)|_G,$$
we get a pair $(G,B_G)$ and a Fano type contraction $g:G\to H$ such that $K_G+B_G\sim_{\R} 0/H$ and $(G,B_G)$ is $\epsilon$-lc along $g^{-1}(V\cap H)$.  

By removing some codimension two closed subset of $Z$, we may assume that $D_H:=D\cap H$ is irreducible. Let $t$ be the lc threshold of $g^*D_H$ over the generic point of $D_H$ with respect to the pair $(G,B_G)$. Since $\dim H=\dim Z-1=n-1$, by the inductive hypothesis, $t$ is bounded from below by a fixed positive number depending only on $d,\epsilon$. By further shrinking $Z$, we can assume that $(G,B_G+tg^*D_H)$ is lc. Applying inversion of adjunction \cite{Ka07}, we deduce that $(X,B+G+tf^*D)$ is lc in a neighbourhood of $G=f^*H$. Therefore, $(X,B+tf^*D)$ is lc over the generic point of $D$ and we are done.

\textbf{Case 2: $D$ is exceptional over $Z$.} We claim that there is a birational morphism $\pi:Z''\to Z$ extracting $D$ with $\text{Exc}(\pi)=D$. {\color{black}Indeed, as $X$ is of Fano type over $Z$, by Lemma \ref{lem:preserve} (2) $Z$ is of Fano type over $Z$, hence there is a klt pair $(Z,\Gamma)$ on $Z$.} Pick sufficiently small $s>0$. Then  the claim follows from the following three facts:
\begin{itemize}
  \item $(Z,s\Gamma+(1-s)B_Z+(1-s)M_Z)$ is generalised klt, 
  \item $a(D,Z,s\Gamma+(1-s)B_Z+(1-s)M_Z)<1$ (as $a(D,Z,B_Z+M_Z)<1$), and
  \item $Z$ is $\Q$-factorial.
\end{itemize}
As $\text{Exc}(\pi)=D$ and as $\cent_Z D\subseteq V$, $\pi$ is {\color{black}an isomorphism} over $U:=Z\setminus V$.

Let $\phi:W\to X$ be a log resolution of $(X,B)$ such that the induced rational map $W\dashrightarrow Z''$ is a morphism. Define a boundary $\Delta_W$ on $W$ such that
$$\mu_E \Delta_W =
\begin{cases}
1 & \text{if $E$ is exceptional$/X$ and $f(\phi(E))\nsubseteq V$,} \\
1-\epsilon & \text{if $E$ is exceptional$/X$ and $f(\phi(E))\subseteq V$,}\\
\mu_E \widetilde{B}  & \text{if $E$ is not exceptional$/X$.}
\end{cases}
$$ 
for any prime divisor $E$ on $W$, where $\widetilde{B}$ is the birational transform of $B$ on $W$. Then $(W,\Delta_W)$ is $\epsilon$-lc along $\phi^{-1}f^{-1}V$ and we can write
$$K_W+\Delta_W= \phi^*(K_X+B)+C,$$
where $C\geq 0$ and $C$ is exceptional$/X$. In particular, by \cite[Theorem 1.8]{Bi12} if we run an MMP$/X$ on $K_W+\Delta_W$ with scaling of some ample$/X$ divisor, then the MMP terminates with a model $(W',\Delta_{W'})$ such that $K_{W'}+\Delta_{W'}$ is the pullback of $K_X+B$.

Let $G$ be the graph of {\color{black} the} rational map $X\dashrightarrow Z''$, i.e. the closure in $X\times Z''$ of the graph of $X_0\to Z''$ where $X_0\subseteq X$ is the domain of $X\dashrightarrow Z''$. Since $W$ maps to both $X$ and $Z''$, we get an induced morphism $W\to G$. Running an MMP$/G$ on $K_W+\Delta_W$ with scaling of some ample$/G$ divisor, we end up with a model $Y$ on which $K_Y+\Delta_Y$ is nef$/G$. As $\pi:Z''\to Z$ is {\color{black}an isomorphism} over $U$, $f^{-1} U\subseteq X_0$ and hence ${\color{black}G}\to X$ is an isomorphism over $U$. So, $K_Y+\Delta_Y$ is the pullback of $K_X+B$ over $U$ by the last paragraph. Since $K_X+B\sim_{\R} 0/Z$, $K_Y+\Delta_Y\sim_{\R} 0$ over $U$. As $(W,\Delta_W)$ is $\epsilon$-lc along {\color{black} the} inverse image of $V$, so is $(Y,\Delta_Y)$. So, by \cite[Theorem 1.4]{Bi12} and \cite{HMX14}, we can run an MMP$/Z''$ on $K_Y+\Delta_Y$ which ends up with a model $X'$ on which $K_{X'}+\Delta_{X'}$ is semi-ample$/Z''$. Since $K_Y+\Delta_Y\sim_{\R} 0$ over $U$, the MMP does not modify $Y$ over $U$, i.e. $Y\dashrightarrow X'$ is an isomorphism over $U$.

Let $f':X'\to Z'$ be the contraction$/Z''$ defined by $K_{X'}+\Delta_{X'}$. By construction, the induced morphism $Z'\to Z''$ is {\color{black}an isomorphism} over $U$, hence is birational. Moreover, $(X',\Delta_{X'})$ is $\epsilon$-lc along the inverse image of $V$ and $K_{X'}+\Delta_{X'}\sim_{\R} 0/Z'$. 

Next we show that $X'$ is of Fano type over $Z'$. By construction, over $U$ we have $X'\dashrightarrow X$ is a morphism, $K_{X'}+\Delta_{X'}$ is the pullback of $K_X+B$ and every exceptional$/X$ component of $\Delta_{X'}$ has coefficient $=1$. So, by Lemma \ref{lem:preserve} (3), $X'$ is of Fano type over $U$, which implies that $-K_{X'}$ is big over $Z'$. As $X$ is of Fano type over $Z$, there is a klt pair $(X,L)$ such that $K_X+L\sim_{\R} 0/Z$. Let $K_{X'}+L_{X'}$ be the {\color{black}crepant} pullback of $K_X+L$ to $X'$ (i.e. pull  back  $K_X+L$ to a common resolution of $X$ and $X'$ and then push it down to $X'$). Then $(X',L_{X'})$ is sub-klt and $K_{X'}+L_{X'}\sim_{\R} 0/Z'$. Note that every exceptional$/X$ component of $\Delta_{X'}$ has positive coefficient (here we may assume that $\epsilon<1$). Put
$$\varXi_{X'}:=(1-s)\Delta_{X'}+sL_{X'}$$
for sufficiently small positive $s$. Then  $(X',\varXi_{X'})$ is a klt pair and $K_{X'}+\varXi_{X'}\sim_{\R} 0/Z'$. This implies that $X'$ is of Fano type over $Z'$.

Therefore, $(X',\Delta_{X'})$ and $f':X'\to Z'$ satisfy the assumptions of Theorem \ref{thm:main}. We can view $D$ as a prime divisor on $Z'$. Let $t$ be the lc threshold of $f'^*D$ over the generic point of $D$ with respect to $(X',\Delta_{X'})$. By the argument in Case 1, $t$ is bounded from below by a fixed positive number depending only on $d,\epsilon$.

Let $p:V\to X$ and $q:V\to X'$ be a common resolution and let
$$M:=K_{X'}+\Delta_{X'}-q_*p^*(K_X+B).$$
As mentioned above we have
$$K_W+\Delta_W-\phi^*(K_X+B)=C\geq 0.$$
So $M\geq 0$ as $M$ is just the pushdown of $K_W+\Delta_W-\phi^*(K_X+B)$ via $W\dashrightarrow X'$. It follows from $K_X+B\sim_{\R} 0/Z$ that
$$p^*(K_X+B)=q^*q_*p^* (K_X+B).$$
{\color{black} Combined with} $M\geq 0$ we get
$$q^*(K_{X'}+\Delta_{X'})-p^*(K_X+B)=q^*M\geq 0.$$
Write
$$K_V+B_V=p^*(K_X+B) \quad \text{ and } \quad K_V+\Delta_V=q^*(K_{X'}+\Delta_{X'}).$$ 
Then $B_V\leq \Delta_V$. Moreover, $(V,\Delta_V+tq^*f'^*D)$ is sub-lc over the generic point of $D$. Therefore, $(V,B_V+tq^*f'^*D)$ is sub-lc over the generic point of $D$ and we are done.
\end{proof}
\begin{lem}\label{lem:epsilon-lc}
Theorem \ref{thm:main} holds when $\dim Z=1$, $X$ is $\epsilon$-lc and $-K_X$ is big and nef over $Z$.
\end{lem}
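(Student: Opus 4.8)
The plan is to use that $Z$ is a curve to convert the statement into a lower bound for a single log canonical threshold, and then to establish that bound by controlling the \emph{degenerate} fibre with the help of the boundedness of the general fibre.

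\emph{Reduction to an lct bound.} Since $\dim Z=1$, the variety $Z$ is a smooth curve, every proper birational morphism $Z'\to Z$ is an isomorphism, and for a closed point $z\in Z$ the only divisorial valuation of $k(Z)$ centred at $z$ is $\operatorname{ord}_z$. Hence $(Z,B_Z+M_Z)$ is generalised $\delta$-lc along $V$ if and only if $a(z,Z,B_Z+M_Z)=1-\mu_zB_Z\ge\delta$ for every closed point $z\in V$, and by the definition of the discriminant part this says precisely that the log canonical threshold $t_z:=\lct(X,B;f^*z)$ over the generic point of $z$ satisfies $t_z\ge\delta$. The case $V=Z$ is \cite[Theorem 1.1]{Bi23}, so I may assume $V$ is a finite set of closed points, fix $z\in V$, and work locally over a neighbourhood of $z$. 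After shrinking $Z$ around $z$ I may assume $(X,B)$ is lc by Lemma \ref{lem:lc}; after a small $\mathbb Q$-factorialisation of $X$ — which is crepant, hence leaves $X$ being $\epsilon$-lc, $-K_X$ being big and nef over $Z$, $X$ being of Fano type over $Z$, and $t_z$ all unchanged — I may assume $X$ is $\mathbb Q$-factorial; and I may assume every fibre over $Z\setminus\{z\}$ is as general as the generic fibre, since $t_z$ depends only on $X$ near $f^{-1}(z)$.

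\emph{Boundedness of the general fibre.} Let $F$ be a general fibre. Then $(F,0)$ is $\epsilon$-lc (it is a general member of the family of fibres of the $\epsilon$-lc variety $X$), $-K_F=(-K_X)|_F$ is big and nef, and $\dim F=d$; by \cite{Bi19,Bi21} such $F$ form a bounded family depending only on $d$ and $\epsilon$, so there is $n=n(d,\epsilon)$ for which $-nK_F$ is very ample and $(-nK_F)^{d}$, $h^0(F,-nK_F)$ are bounded in terms of $d,\epsilon$. I stress that, in contrast with the setting of \cite{Bi23}, the \emph{pair} $(F,B|_F)$ is only lc and in general not bounded — only the underlying variety $F$ is bounded — so \cite{Bi23} cannot be quoted directly and its argument has to be adapted.

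\emph{Bounding the degenerate fibre.} Write $f^*z=\sum_im_iF_i$ with $F_i$ the prime components of $f^{-1}(z)$. Any divisor $E$ over $X$ with $\operatorname{ord}_E(f^*z)>0$ satisfies $\cent_X E\subseteq f^{-1}(z)$, so $a(E,X,B)\ge\epsilon$ by hypothesis; computing the threshold on a log resolution of $(X,B+f^*z)$ gives
$$t_z=\min_E\frac{a(E,X,B)}{\operatorname{ord}_E(f^*z)}\ \ge\ \frac{\epsilon}{M},\qquad M:=\max_E\operatorname{ord}_E(f^*z),$$
where $E$ runs over the finitely many divisors on that resolution which are vertical over $z$ (the minimum being genuine). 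So it remains to bound $M$, that is, the multiplicities $m_i$ together with the multiplicities of the exceptional divisors needed to make $\bigcup_iF_i$ simple normal crossing near $f^{-1}(z)$ — equivalently, to bound the $m_i$, the number of components of $f^{-1}(z)$, and their tangencies, all in terms of $d,\epsilon$. The bound $m_i\le N(d,\epsilon)$ is the multiplicity statement for $\epsilon$-lc Fano fibrations (applicable here since $X$ is $\epsilon$-lc and $-K_X$ is big and nef over $Z$, after passing if necessary to the relative anticanonical model over $Z$), and the bound on the number and tangencies of the $F_i$ comes from the same circle of ideas, using the positivity of $-K_X$ — one spreads out from the bounded general fibre a relatively very ample divisor numerically $\sim -nK_X$ and intersects it with powers of $-nK_X$ — together with the $\epsilon$-lc property of $X$; this is carried out in \cite{Bi23}, and it is the step I expect to require the most work. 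Throughout, the boundary $B$ intervenes only through the discrepancies $a(E,X,B)$ of divisors $E$ vertical over $z$, all of which are $\ge\epsilon$, so the horizontal part of $B$ — whose coefficients may be close to $1$ — causes no difficulty. Granting this, $M\le M(d,\epsilon)$ and we may take $\delta=\epsilon/M$.
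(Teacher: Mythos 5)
There is a genuine gap, and it sits exactly where you defer the work: the step ``it remains to bound $M$''. The quantity $M=\max_E \mathrm{ord}_E(f^*z)$, taken over \emph{all} divisors of a log resolution of $(X,B+f^*z)$, is not bounded in terms of $d,\epsilon$, because the resolution must also resolve the horizontal part of $B$, which (as you yourself note) is unbounded. Concretely, already in relative dimension one take $X$ a $\PP^1$-bundle over a curve $Z$ (so $X$ is smooth and of Fano type over $Z$ with $-K_X$ relatively ample), $f^*z$ reduced, and let $B$ contain a multisection $C$ of degree $2k$ over $Z$ with coefficient $1/k$ having a branch tangent to $f^{-1}(z)$ to order about $2k$ at a point; the degree condition $\deg(B|_F)=2$ and the hypothesis that $(X,B)$ is $\epsilon$-lc along $f^{-1}(z)$ are satisfied, but any log resolution of $(X,B+f^*z)$ must blow up a chain of points lying on the strict transform of the fibre and the previous exceptional divisor, producing divisors $E_j$ with $\mathrm{ord}_{E_j}(f^*z)=j$ for $j$ up to roughly $2k$. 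These $E_j$ are harmless in reality, since $a(E_j,X,B)\geq j+1-j/k$ grows with $\mathrm{ord}_{E_j}(f^*z)$, but your estimate replaces $a(E_j,X,B)$ by $\epsilon$, so the bound $\epsilon/M$ degenerates as $k\to\infty$. Thus the hard step is not ``bound the multiplicities, number and tangencies of the components of $f^{-1}(z)$'' (bounding the $m_i$ is indeed accessible, e.g.\ by applying \cite{Bi23} to $(X,\Theta)$ for a general $\Theta\sim_{\R}-K_X/Z$), and it is not carried out in \cite{Bi23} in the form you need: no uniform bound on log-resolution multiplicities of $f^*z$ holds in the presence of the unbounded horizontal part of $B$. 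The paragraph on boundedness of the general fibre, while correct, is never actually used to close this gap.

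The paper's proof avoids bounding all resolution divisors and instead bounds $\mu_T f^*z$ for a \emph{single} extremal divisor $T$: taking $t$ maximal with $(X,B+tf^*z)$ being $\frac{\epsilon}{2}$-lc along $f^{-1}\{z\}$ and $T$ with $a(T,X,B+tf^*z)=\frac{\epsilon}{2}$, one has $t\,\mu_Tf^*z\geq\frac{\epsilon}{2}$, so it suffices to bound $\mu_Tf^*z$. This is where the nefness of $-K_X$ over $Z$ enters (your argument never uses it): extract $T$ on a $\Q$-factorial model $Y\to X$, run an MMP on $-K_Y$ over $Z$ (an isomorphism over $Z\setminus\{z\}$), take a general $R_{Y'}\sim_{\Q}-K_{Y'}/Z$ and pull it back crepantly to get a klt pair $(X,R)$ with $K_X+R\sim_{\Q}0/Z$, $\epsilon$-lc over $Z\setminus\{z\}$ and, crucially, $a(T,X,R)\leq 1$. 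The averaged pair $(X,\tfrac12R+\tfrac12B)$ is then globally $\tfrac{\epsilon}{2}$-lc, trivial over $Z$, and still satisfies $a(T,X,\tfrac12R+\tfrac12B)\leq 1$, so \cite[Theorem 1.1]{Bi23} applied to it bounds $\mu_Tf^*z$ from above. Your proposal would need an analogous device that makes the pair globally $\epsilon'$-lc while keeping track of the extremal valuation; as written, the reduction to $t_z\geq\epsilon/M$ discards exactly the information needed and the claimed bound on $M$ is false.
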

\begin{proof}
We may assume that $\epsilon< 1$. When $V=Z$, the theorem follows from \cite[Theorem 1.1]{Bi23}. So we may assume that $V=\{z\}$, where $z$ is a closed point of $Z$. Shrinking $Z$ around $z$, we may assume that $(X,B)$ is lc by Lemma \ref{lem:lc}.

Let $t$ be the largest number such that $(X,B+tf^*z)$ is $\frac{\epsilon}{2}$-lc along $f^{-1}\{z\}$. It is enough to show that $t$ is bounded from below away from zero. We can find a prime divisor $T$ over $X$ such that $f(\cent_X T)=\{z\}$ and 
$$a(T,X,B+tf^*z)=\frac{\epsilon}{2}.$$
Since $(X,B)$ is $\epsilon$-lc along $f^{-1}\{z\}$, we have
$$t\cdot \mu_T f^*z\geq \frac{\epsilon}{2}.$$
So it is enough to show that $\mu_T f^*z$ is bounded from above. Replacing $\epsilon$ by $\frac{\epsilon}{2}$ and $B$ by $B+tf^*z$, we can assume that $a(T,X,B)=\epsilon$.

{\color{black}
Let $Y\to X$ be the birational contraction which extracts $T$ and such that $Y$ is $\Q$-factorial. Let $K_Y+B_Y$ be the pullback of $K_X+B$ to $Y$. By Lemme \ref{lem:preserve} (3), $Y$ is of Fano type over $Z$. Run an MMP on $-K_Y$ over $Z$ which ends with a good minimal model $Y'$, that is, $-K_{Y'}$ is semiample over $Z$. By Lemma \ref{lem:preserve} (1), $Y'$ is of Fano type over $Z$, so $Y'$ is klt. We claim that $Y'$ is $\epsilon$-lc over $Z\setminus\{z\}$. Indeed, as $-K_X$ is nef over $Z$ and as $Y$ is isomorphic to $X$ over $Z\setminus\{z\}$, $-K_Y$ is nef over $Z\setminus\{z\}$, hence the MMP does not modify $Y$ over $Z\setminus\{z\}$. It follows that $Y'$ is isomorphic to $X$ over $Z\setminus\{z\}$. So
$Y'$ is $\epsilon$-lc over $Z\setminus\{z\}$ as $X$ is $\epsilon$-lc.

Take a general $0\leq R_{Y'}\sim_{\Q} -K_{Y'}/Z$. Then $(Y',R_{Y'})$ is klt globally and is $\epsilon$-lc over $Z\setminus\{z\}$, as $-K_{Y'}$ is semiample over $Z$. Let $K_Y+R_Y$ be the crepant pullback of $K_{Y'}+R_{Y'}$ to $Y$, then $K_Y+R_Y\sim_{\Q} 0/Z$. As $Y\dashrightarrow Y'$ is an MMP on $-K_Y$, we have $R_{Y}\geq 0$. Moreover, we have $(Y,R_{Y})$ is klt globally and is $\epsilon$-lc over $Z\setminus\{z\}$. 

Pushing down $K_Y+R_Y$ to $X$, we get a klt pair $(X,R)$ which is $\epsilon$-lc over $Z\setminus \{z\}$. Moreover, we have $K_X+R\sim_{\Q} 0/Z$ and  $a(T,X,R)\leq 1$. Now $(X,\frac{1}{2} R+ \frac{1}{2}B)$ is $\frac{\epsilon}{2}$-lc: if $D$ is a prime divisor over $X$ which is horizontal over $Z$, then we use the fact that $(X,R)$ is $\epsilon$-lc over $Z\setminus\{z\}$; and if $D$ maps to $z$, we use the assumption that $(X,B)$ is $\epsilon$-lc along $f^{-1}\{z\}$. Moreover,
\begin{align*}
a(T,X,\frac{1}{2}R+\frac{1}{2}B)&=\frac{1}{2}a(T,X,R)+\frac{1}{2}a(T,X,B)
\leq \frac{1+\epsilon}{2}\leq 1.
\end{align*}
Therefore applying \cite[Theorem 1.1]{Bi23} we deduce that $\mu_T f^*z$ is bounded from above as required.
}
\end{proof}
\begin{proof}[Proof of Theorem \ref{thm:main}]
We may assume that $\epsilon\leq 1$. By Lemma \ref{lem:base-dimension-1}, we may assume that $\dim Z=1$. 
Shrinking $Z$ around  $V$, we may assume that $(X,B)$ is lc by Lemma \ref{lem:lc}.
Replacing $(X,B)$ by a $\Q$-factorial dlt model, we may assume that $X$ is $\Q$-factorial and $X$ is klt.

Let $Y\to X$ be the birational contraction which extracts all {\color{black} the} prime divisors $D$ with $a(D,X,0)<\epsilon$ and such that $Y$ is $\Q$-factorial. Then $Y$ is $\epsilon$-lc. Let $K_Y+B_Y$ be the pullback of $K_X+B$ to $Y$.
Run an MMP on $K_Y$ over $Z$ which  ends with a Mori fiber space $h:Y'\to  Z'$ over $Z$. Denote by $B_{Y'}$ the pushdown of $B_Y$ to $Y'$ and by $g$ the induced morphism $Z'\to Z$. By construction, $(Y',B_{Y'})$ is $\epsilon$-lc along $h^{-1}g^{-1}V$. 

If $\dim Z'=\dim Z$, then $Z'=Z$ (as $\dim Z=1$) and we apply Lemma \ref{lem:epsilon-lc} to $(Y',B_{Y'})\to Z$, which implies the theorem for $(X,B)\to Z$. 
If $\dim Z'>\dim Z$, we apply induction on relative dimension and apply Lemma \ref{lem:factor} to $(Y',B_{Y'})\to Z$, which implies the theorem for $(X,B)\to Z$. So we are done.
\end{proof}

\begin{proof}[Proof of Theorem \ref{thm:generalised}]
It follows from Theorem \ref{thm:main} and Lemma \ref{lem:pure-to-generalised}.
\end{proof}

\begin{proof}[Proof of Corollary \ref{cor:vertical}]
For any prime divisor $E$ over $Z$, $V:=\cent_Z E$ is a proper subvariety of $Z$. Applying Theorem \ref{thm:main}, $(Z,B_Z+M_Z)$ is generalised $\delta$-lc along $V$ where $\delta>0$ depends only on $d,\epsilon$. It follows that $a(E,Z,B_Z+M_Z)\geq \delta$. Hence $(Z,B_Z+M_Z)$ is generalised $\delta$-lc.
\end{proof}

\begin{proof}[Proof of Corollary \ref{cor:sing-mult}]
{\color{black}As $X$ is klt and $-K_X$ is ample over $Z$, $X$ is of Fano type over $Z$.} We may assume that $\epsilon<1$. Take a large integer $N$ such that $1/N\leq 1-\epsilon$ and $-NK_X$ is very ample over $Z$. Let $H$ be a general effective divisor such that $H\sim -NK_X/Z$ and let $B=\frac{1}{N} H$. Then $K_X+B\sim_{\Q} 0/Z$ and $(X,B)$ is {\color{black} $\epsilon$-lc vertically over $Z$}. Applying Corollary \ref{cor:vertical}, the generalised {\color{black} pair} $(Z,B_Z+M_Z)$ given by the canonical bundle formula is generalised $\delta$-lc where $\delta>0$ depends only on $d,\epsilon$. It follows that if $K_Z$ is $\Q$-Cartier, then $Z$ is $\delta$-lc. Moreover, by Corollary \ref{cor:vertical} for any codimension one point $z\in Z$, $(X,B+\delta f^*\overline{z})$ is lc over $z$, which implies that the multiplicity of each component of $f^*z$ is bounded from above by $1/\delta$.
\end{proof}


\end{document}